\def\Mod{{\rm Mod}}
\def\Hom{{\rm Hom}}
\def\Im{{\rm Im}}
\def\Ker{{\rm Ker}}
\def\id{{\rm id}}
\def\A{{\mathscr{A}}}
\def\C{{\mathscr{C}}}
\def\K{{\mathscr{K}}}
\newtheorem{thm}{\bf Theorem}[section]
\newtheorem{cor}[thm]{\bf Corollary}
\newtheorem{lem}[thm]{\bf Lemma}
\newtheorem{prop}[thm]{\bf Proposition}
\newtheorem{Def}[thm]{\bf Definition}
\newtheorem{ex}[thm]{\bf Example}
\begin{document}
	\title{A new approach to projectivity in the categories of complexes,  II}
	\author{Driss Bennis, Juan Ram\'on Garc\'{\i}a Rozas,\\
	 Hanane Ouberka and Luis Oyonarte}

	\date{}
	
	\maketitle

\bigskip
%%%%%%%%%%%%%%%%%%%%%%%%%%%%%%%%%%%%%%%%%%%%%%%%%%%%%%%%%

%%%%%%%%%%%%%%%%%%%%%%%%%%%%%%%%%%%%%%%%%%%%%%%%%%%%%%%%
%%%%%%%%%%%%%%%%%%%%%%%%%%%%%%%%%%%%%%%%%%%%%%%%%%%%%%%%%
%%%ABSTRACT%%%%%%%%%%%%%%%%%%%%%%%%%%%%%%%%%%%%%%%%%%%%%%
\noindent{\large\bf Abstract.}  It is now very known how the subprojectivity of modules provides a fruitful new unified framework of the classical projectivity and flatness. In this paper, we extend this fact to the category of complexes by generalizing and unifying several known classical results.  We further provide various examples to illustrate the scopes and limits of the established results.\\
\indent This paper is a continuation of a recent work in which it was shown among other several things that the subprojectivity of complexes can be characterized in terms of morphisms in the homotopy category.

%%%%%%%%%%%%%%%%%%%%%%%%%%%%%%%%%%%%%%%%%%%%%%%%%%%%%%%%% 

\bigskip

\small{\noindent{\bf Key Words.} Subprojectivity domain of a complex; Subprojectivity domain of a class complexes; projective complex; flat complex}

\small{\noindent{\bf 2010 Mathematics Subject Classification.} 16E05}

 %%%%%%%%%%%%%%%%%%%%%%%%%%%%%%%%%%%%%%

%%%%%%%%%%%%%%%%%%%%%%%%%%%%%%%%%%%%%%%%%%%%%%%%%%%%%%%%
%%%%%%%%%%%%%%%%%%%%%%%%%%%%%%%%%%%%%%%%%%%%%%%%%%%%%%%%
%%%%%%%%%%%%%%%%%%%%%%%%%%%%%%%%%%%%%%%%%%%%%%%%%%%%%%%%

%%%%%%%%%%%%%%%%%%%%%%%%%%%%%%%%%%%%%%%%%%%%%%%%%%%%%%%%%
%%%INTRODUCTION%%%%%%%%%%%%%%%%%%%%%%%%%%%%%%%%%%%%%%%%%%

\section{Introduction}
Throughout the paper $R$ will denote an associative ring with unit (nonnecessarily commutative). The category of left $R$-modules will be denoted by $R-\Mod$. Modules are, unless otherwise explicitly stated, left $R$-modules.  
The category of complexes of $R$-modules will be denoted by $\C(R)$. For two complexes $X$ and $Y$, we use $\Hom_{\C(R)}(X,Y)$ to present the group of all morphisms of complexes from $X$ to $Y$.  The homotopy category will be denoted by $\K(R)$.  For two complexes $X$ and $Y$, we use $\Hom_{\K(R)}(X,Y)$ to present the homotopy equivalence classes of morphisms of complexes from $X$ to $Y$.\\  
 Given a class of modules $\mathcal{L}$, a complex $X: \xymatrix{\cdots \ar[r] & X_{i+1}\ar[r]^{d_{i+1}} & X_{i}\ar[r]^{d_i}  &X_{i-1}\ar[r] &\cdots}$ is said to be $\Hom_R(\mathcal{L},-)$-exact (resp., $\Hom_R(-,\mathcal{L})$-exact) if the complex of abelian groups $\Hom_R(L,N)$ (resp., $\Hom_R(N,L)$) is exact for every $L \in \mathcal{L}$. We will  denote by $\epsilon_n^X: X_{n} \to \Im d_{n}$ the canonical epimorphism and by $\mu_n^X : \Ker(d_{n-1}) \to X_{n-1}$ the canonical monomorphism. The $n^{th}$ boundary (resp., cycle, homology) of a complex $X$ is defined as $\Im\, d_{n+1}^X$ (resp., $\Ker\, d_n^X$,  $ \Ker\, d_n / \Im\, d_{n+1}$) and it is denoted by $B_n(X)$ (resp., $Z_n(X)$, $H_n(X)$).  If there is a bound $b$ such that $X_n=0$ for all  $n\geqslant b$ (resp. $n \leqslant b$) we say that the complex $X$ is bounded above (resp. bounded below). When a complex is bounded above and below,  we say simply say it is bounded.\\
Also, throughout the paper,  for a module $M$, we denote by $\overline{M}$ the complex  with all terms $0$ except $M$ in the degrees $1$ and $0$ with identity morphism of $M$. Also, we denote by $\underline{M}$ the complex  with all terms $0$ except $M$ in the degree $0$. Given a complex $X$ with differential $d^X$ and an integer $n$, we denote by $X[n]$ the complex consisting of $X_{i-n}$ in degree $i$ with differential $(-1)^nd_{i-n}^X$.\\

 Given two objects $M$ and $N$ of an abelian category $\A$ with enough projectives, $M$ is said to be $N$-subprojective if for every epimorphism $g:B\to N$ and every morphism $f:M\to  N$, there exists a morphism $h:M\to B$ such that $gh=f$, or equivalently, if every morphism $M\to N$ factors through a projective object (see \cite[Proposition 2.7]{SubprojAb}). The subprojectivity domain of any object $M$, denoted by ${\underline {\mathfrak{Pr}}}_{\A}^{-1}(M)$, is defined as the class of all objects $N$ such that $M$ is $N$-subprojective, and the subprojectivity domain of a whole class $\mathfrak{C}$ of $\A$, denoted by ${\underline {\mathfrak{Pr}}}_{\A}^{-1}(\mathfrak{C})$, is defined as the class of objects $N$ such that every $C$ of $\mathfrak{C}$ is $N$-subprojective.  In \cite[Theorem 3.11]{Subcom}, it is proved that,  for any two complexes $M$ and $N$ with $N_{n+1} \in {\underline {\mathfrak{Pr}}}_{R-\Mod}^{-1}(M_n)$ for every $n\in \mathbb{Z}$, the conditions $N \in {\underline {\mathfrak{Pr}}}_{\C(R)}^{-1}(M)$ and $\Hom_{\K(R)}(M,N)=0$ are equivalent. As a consequence of \cite[Theorem 3.11]{Subcom},       subprojectivity of some classes were determined. Namely, it is shown that the subprojectivity of the  class $\{\underline{R}[n]\ n \in \mathbb{Z}\}$ is the class of exact complexes (see  \cite[Corollary 3.17]{Subcom}).\\

The notion of subprojectivity was introduced in \cite{Sergio} as a new treatment in the analysis of the projectivity of a module. However, the study of the subprojectivity goes beyond that goal and, indeed, provides, among other things, a new and interesting perspective on some other known notions. In this way, an alternative perspective on the projectivity of an object of an abelian category $\A$ with enough projectives was investigated in \cite{SubprojAb}. It is proved among several things, that  the subprojectivity provides  a new   unified  framework of the classical  projectivity and flatness. Namely,   one can see clearly that, for    an abelian category $\A$  with enough projectives,   ${{\underline {\mathfrak{Pr}}}_{\A}}^{-1}(\mathcal{P}_{\A})=\mathcal{\A}$, where $\mathcal{P}_{\A}$ denotes the class of projective objects of $\mathcal{A}$.  On the other hand, we have  ${{\underline {\mathfrak{Pr}}}_{\A}}^{-1}(\A)=\mathcal{P}_{\A}$ and ${{\underline {\mathfrak{Pr}}}_{\A}}^{-1}(\mathcal{FP_{\A}})=\mathcal{F}_{\A}$, where $\mathcal{FP}_{\A}$ denotes the class of finitely presented objects, and $\mathcal{F}_{\A}$ denotes the class of flat objects (an object $F$ is said to be flat if every short exact sequence $0 \to A \to B \to F \to 0$ is pure, that is, if for every finitely presented object $P$, $\Hom_{\A}(P, -)$ makes this sequence exact, \cite{Sten}).

In \cite{Subcom},  a first   treatment of   the    subprojectivity in the category of complexes is done. It is  proved among several things  that the concept of subprojectivity  in the category of complexes is   closely  linked to that of null-homotopy of morphisms.  In this paper, we go further in the study of subprojectivity in   the category of complexes. 
Namely, we approve that the  subprojectivity in the category of complexes provides also a new interesting unified  framework of the classical  projectivity and flatness from which arise several natural questions that we will answer throughout the paper. The structure of the paper is as follows:
 
 In Section 2, we study the relationship between the subprojectivity of a complex and the subprojectivity of its cycles. As a first natural question, inspired by some classical facts,  we ask whether for an exact complex $M$ and a complex $N$, $ N \in {{\underline {\mathfrak{Pr}}}_{\C(R)}}^{-1}(M)$ if $N_{n} \in {{\underline {\mathfrak{Pr}}}_{R-\Mod}}^{-1}(Z_{n-1}(M))$ for every $n \in \mathbb{Z}$. In Proposition \ref{prop-QF}, we show that  this holds for every exact complex $M$ and every bounded complex $N$ only on quasi-Fr\"obenius rings (rings over which every projective module is injective or equivalently, every injective module is projective). Then, we are interested in the subprojectivity domain of   contractible complexes, that is, complexes of the form $\oplus_{n \in \mathbb{Z}} \overline{M_n}[n]$ for some family of modules $\{M_n\}_{n \in \mathbb{Z}}$.   For   a complex $N$  and  a family of modules $\{M_n\}_{n \in \mathbb{Z}}$, we show that  $N \in {{\underline {\mathfrak{Pr}}}_{\C(R)}}^{-1}(\oplus_{n \in \mathbb{Z}}\overline{M_n}[n])$ if and only if $N_{n+1} \in  {{\underline {\mathfrak{Pr}}}_{R-\Mod}}^{-1}(M_n)$ for every $n \in \mathbb{Z}$. 
This can be seen as a new extension of the known fact that the  projective complexes are exactly the contractible ones  with projective cycles.  As a   second main result of Section  2,  we prove that for an exact complex $N$    and  a bounded below complex $M$, $N \in  {{\underline {\mathfrak{Pr}}}_{\C(R)}}^{-1}(M)$ if every $Z_n(N) \in  {{\underline {\mathfrak{Pr}}}_{R-\Mod}}^{-1}(M_n)$ 
(see Theorem  \ref{thm-bound}). Recall that a complex is finitely presented if and only if it is bounded and its terms are all finitely presented modules (see for instance  \cite[Lemma 4.1.1]{JR}). So the second main result is a natural generalization of the characterization of flat complexes being as complexes which are exact with flat cycles.  
At the end of Section 2,  relations between the conditions given in Theorem  \ref{thm-bound} are discussed. Namely, we show that the condition ``$M$  is bounded below'' cannot be dropped  (see Example \ref{exm-bounded}). Also, we give an example showing that the reverse implication of Theorem \ref{thm-bound} does not hold true in general (see Example \ref{exm-inv-main1}).

 Section $3$ is devoted to the study of  subprojectivity domains of classes of complexes.  Namely, inspired by some classical facts, we focus our study on the  classes of complexes  constructed from a class of modules  $\mathcal{L}$:

\begin{itemize} 
	\item The class  of complexes $X$ such that every $X_n \in \mathcal{L}$ will be denoted by $\#\mathcal{L}$.
	\item The class of bounded complexes (resp., bounded below complexes) $X$ such that every $X_n \in \mathcal{L}$ will be denoted by $\C^b(\mathcal{L})$ (resp.,  $\C^-(\mathcal{L})$).
	\item  The class of exact complexes $X$ such that every $Z(X)_n \in \mathcal{L}$ will be denoted by $\mathcal{\widetilde{L}}$.
\end{itemize}

For some particular cases of classes of modules $\mathcal{L}$, the classes of complexes $\#\mathcal{L}$,  $\C^b(\mathcal{L})$ and  $\mathcal{\widetilde{L}}$ are usual. For instance, we have  $\C(R)=\#{R-\Mod}$,    $\mathcal{ P}_{\C(R)}=\widetilde{\mathcal{P_{R-\Mod}}}$ (i.e., the class of projective complexes)   and   $ \mathcal{FP}_{\C(R)}=\C^b(\mathcal{FP}_{R-\Mod})$  (i.e., the class of finitely presented complexes). Thus,  the following questions arise naturally: Let  $\mathcal{L}$ and $\mathcal{G}$ be two classes of modules  such that ${{\underline {\mathfrak{Pr}}}_{R-\Mod}}^{-1}(\mathcal{L})=\mathcal{G}$:
\begin{enumerate}
\item  When do we have ${{\underline {\mathfrak{Pr}}}_{\C(R)}}^{-1}(\mathcal{\widetilde{L}})=\#\mathcal{G}$? 
\item  When do we have $ {{\underline {\mathfrak{Pr}}}_{\C(R)}}^{-1}(\#\mathcal{L})=\mathcal{\widetilde{G}}$?
\item   When do we have $ {{\underline {\mathfrak{Pr}}}_{\C(R)}}^{-1}(\C^b(\mathcal{L}))=\mathcal{\widetilde{G}}$?
\end{enumerate}

 	We will show in Theorem \ref{thm-5-1}, that if $0 \in \mathcal{L} \cap \mathcal{G}$, then ${{\underline {\mathfrak{Pr}}}_{\C(R)}}^{-1}(\mathcal{\widetilde{L}})=\#\mathcal{G}$ if and only if ${{\underline {\mathfrak{Pr}}}_{R-\Mod}}^{-1}(\mathcal{L})=\mathcal{G}$ and  $\Hom_{\K(R)}(M, N)=0$ for any $M \in \mathcal{\widetilde{L}}$ and $N \in \#\mathcal{G}$.
  \\
  
   To answer the second question, we will show in Theorem \ref{thm-5-2}, that if $0, R \in \mathcal{L}$. Then, $ {{\underline {\mathfrak{Pr}}}_{\C(R)}}^{-1}(\#\mathcal{L})=\mathcal{\widetilde{G}}$ if and only if ${{\underline {\mathfrak{Pr}}}_{R-\Mod}}^{-1}(\mathcal{L})=\mathcal{G}$ and  $\Hom_{\K(R)}(M, N)=0$ for any $M \in \#\mathcal{L}$ and $N \in \mathcal{\widetilde{G}}$. 
\\

 We end the paper with   Theorem \ref{thm-5-3} which answers Question 3 as follows:  If $0, R \in \mathcal{L}$, then ${{\underline {\mathfrak{Pr}}}_{R-\Mod}}^{-1}(\mathcal{L})=\mathcal{G}$ if and only if $ {{\underline {\mathfrak{Pr}}}_{\C(R)}}^{-1}(\C^b(\mathcal{L}))=\mathcal{\widetilde{G}}$.
\\

Some consequences of the main results are established (see Proposition \ref{pro-GP} and Corollaries \ref{cor-l1},  \ref{cor-l2} and  \ref{cor-l3}).

  % % % % % % % % % % % % % % % % % % % % % % % % % % %

% % % % % % % % % % % % % % % % % % % % % % % % % % % % % % % % % % % % % %
% % % % % % % % % % % % % % % % % % % % % % % % % % % % % % % % % % % % % %
% % % % % % % % % % % % % % % % % % % % % % % % % % % % % % % % % % % % % %
% % % % % % % % % % % % % % % % % % % % % % % % % % % % % % % % % % % % % %
% % % % % % % % % % % % % % % % % % % % % % % % % % % % % % % % % % % % % %
% % % % % % % % % % % % % % % % % % % % % % % % % % % % % % % % % % % % % %
% % % % % % 
% % % % % %     Subprojectivity domains of complexes
% % % % % % 
% % % % % % % % % % % % % % % % % % % % % % % % % % % % % % % % % % % % % %
% % % % % % % % % % % % % % % % % % % % % % % % % % % % % % % % % % % % % %
% % % % % % % % % % % % % % % % % % % % % % % % % % % % % % % % % % % % % %
% % % % % % % % % % % % % % % % % % % % % % % % % % % % % % % % % % % % % %

\section{Subprojectivity domains of particular complexes}
The purpose of this section is to study the relationship between the subprojectivity of complexes and the subprojectivity of their cycles.\\

The prove of the first main result is based on     the following lemma.

\begin{lem}\label{lem-nul3}
	Let $\mathcal{L}$ be a class of modules, $M$ be  an exact complex which is $\Hom_R(-, \mathcal{L})$-exact and $N$ a bounded complex above. If every morphism $Z_{n-1}(M) \to N_n$ factors through a module in $\mathcal{L}$, then every morphism of complexes $M \to N$ is null-homotopic by a morphism $s$ such that every $s_n$ factors through a module of $\mathcal{L}$. 
\end{lem}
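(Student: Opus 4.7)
\emph{Plan.} The plan is to construct the homotopy $s = (s_n)$ by descending induction on $n$, exploiting the bounded-above hypothesis on $N$. If $b$ is a bound with $N_m = 0$ for all $m \geq b$, I can start by setting $s_m = 0$ for every $m \geq b$; the zero map trivially factors through any module of $\mathcal{L}$ (via a zero composition), and the homotopy identity at these degrees holds automatically.

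For the inductive step, assume that I have built $s_m$ for all $m \geq n$, each factoring through a module of $\mathcal{L}$, with the homotopy identity satisfied at every degree $m \geq n+1$. To construct $s_{n-1} : M_{n-1} \to N_n$, I look at $g := f_n - d^N_{n+1} s_n : M_n \to N_n$, which is the candidate for $s_{n-1} \circ d_n^M$. A short diagram chase combining the chain-map relation for $f$, the inductive homotopy identity at degree $n+1$, and $d^N \circ d^N = 0$ shows that $g \circ d^M_{n+1} = 0$. Since $M$ is exact, $\Ker d_n^M = \Im d_{n+1}^M$, so $g$ vanishes on $\Ker d_n^M$ and factors through the canonical surjection $\epsilon_n^M : M_n \twoheadrightarrow Z_{n-1}(M)$ as $g = \bar{g} \circ \epsilon_n^M$ for a unique $\bar g : Z_{n-1}(M) \to N_n$.

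Now I combine the two remaining hypotheses. By the factorization assumption, $\bar g = \beta \circ \alpha$ with $\alpha : Z_{n-1}(M) \to L$, $\beta : L \to N_n$ and $L \in \mathcal{L}$. The crucial step is to lift $\alpha$ along the inclusion $\mu_n^M : Z_{n-1}(M) \hookrightarrow M_{n-1}$: the morphism $\alpha \circ \epsilon_n^M : M_n \to L$ is killed by precomposition with $d^M_{n+1}$, so exactness of the complex $\Hom_R(M, L)$ (guaranteed by $M$ being $\Hom_R(-,\mathcal{L})$-exact, since $L \in \mathcal{L}$) yields $\tilde\alpha : M_{n-1} \to L$ with $\tilde\alpha \circ d_n^M = \alpha \circ \epsilon_n^M$, or equivalently $\tilde\alpha \circ \mu_n^M = \alpha$. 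Setting $s_{n-1} := \beta \circ \tilde\alpha$ gives a morphism that factors through $L \in \mathcal{L}$ and satisfies $s_{n-1} \circ d_n^M = g$, which is precisely the homotopy identity at degree $n$.

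The only genuinely substantive step is the extension of $\alpha$ from the cycle submodule $Z_{n-1}(M)$ up to $M_{n-1}$, which is where the $\Hom_R(-, \mathcal{L})$-exactness of $M$ does its work; the remainder is diagram chasing supported by the descending induction that the bounded-above hypothesis on $N$ makes available.
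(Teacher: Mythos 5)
Your proposal is correct and follows essentially the same route as the paper's proof: in each degree you use the chain-map relation and the previously built homotopy components to produce a map killing $\Im d^M_{n+1}$, factor it through the cycles $Z_{n-1}(M)$ by exactness of $M$, split it through some $L\in\mathcal{L}$ by hypothesis, and extend the $L$-valued piece from $Z_{n-1}(M)$ to $M_{n-1}$ using $\Hom_R(-,\mathcal{L})$-exactness. The only difference is presentational: the paper normalizes so that $N_n=0$ for $n>0$ and writes out the first two steps explicitly, whereas you phrase the same construction as a uniform descending induction.
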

\begin{proof}
	Let $f: M \to N$ be a morphism of complexes.  We are going to construct a family of morphisms $s_n:M_{n-1} \to N_{n}$, such that $f_n=d^{N}_{n+1}s_{n+1}+s_{n}d_n^{M}$. We suppose that $N_n=0$ for every $n>0$. Then $f_0 d_1^M=0$, so there exists a morphism $t_0:  Z_{-1}(M) \to N_0$ such that $t_0 \epsilon^M_0=f_0$. Then,  there exist two morphisms $\beta_0 : Z_{-1}(M) \to L_0$ and $\alpha_0 : L_0 \to N_0$ with $L_0 \in \mathcal{L}$  and $t_0=  \alpha_0 \beta_0$. Since $M$ is $\Hom_R(-, \mathcal{L})$-exact, there exists a morphism $\gamma_0:M_{-1}\to L_0$ such that $\gamma _0 \mu_0^M =\beta_0$.
	$$\xymatrix{
		\cdots \ar[rr]&&M_0\ar[rrr]^{d_0^M}\ar[rd]^{\epsilon_0^M}\ar[rddd]_{f_0}&&&M_{-1}\ar[rr]^{d_{-1}^M}\ar[ldd]_{\gamma_0}\ar[rddd]_{f_{-1}}&& \cdots\\
		&&&Z_{-1}(M)\ar[rd]_{\beta_0}\ar[rru]^{\mu_0^M}\ar[dd]_{t_0}&&&&\\
		&&&&L_0\ar[ld]_{\alpha_0}&&&\\
		0 \ar[rrr]&&&N_0\ar[rrr]_{d_{0}^N}&&&N_{-1}\ar[r]_{d_{-1}^M}& \cdots
	}$$
	Let $s_0=\alpha_0 \gamma_0$. One can check that $s_0 d_0^M=f_0$, hence $f_{-1}d_0^M=d_0^Nf_0= d_0^Ns_0 d_0^M$, so there exists a morphism $t_{-1}:  Z_{-2}(M) \to N_{-1}$ such that $t_{-1} \epsilon^M_{-1}=f_{-1}-d_0^Ns_0 $. Then,  there exist two morphisms $\beta_{-1} : Z_{-2}(M) \to L_{-1}$ and $\alpha_{-1} : L_{-1} \to N_{-1}$ with $L_{-1}\in \mathcal{L}$ and $t_{-1}=  \alpha_{-1} \beta_{-1}$.  Since $M$ is $\Hom_R(-, \mathcal{L})$-exact there exists a morphism $\gamma_{-1}:M_{-2}\to L_{-1}$ such that $\gamma _{-1} \mu_{-1}^M =\beta_{-1}$. Let $s_{-1}=\alpha_{-1}\gamma_{-1}$. Then,  $f_{-1}-d_0^Ns_0=t_{-1} \epsilon^M_{-1}=\alpha_{-1} \beta_{-1} \epsilon^M_{-1}=\alpha_{-1} \gamma _{-1} \mu_{-1}^M \epsilon^M_{-1}=\alpha_{-1} \gamma _{-1} d_{-1}^M=s_1  d_{-1}^M$, hence $f_{-1}= d_0^Ns_0+s_{-1}  d_{-1}^M$. Using the same arguments we construct, and for any $n\leqslant0$, $s_n:M_{n-1} \to N_{n}$, such that $f_n=d^{N}_{n+1}s_{n+1}+s_{n}d_n^{M}$, for $n>0$, we take $s_n=0$. Therefore, $f: M \to N$ is null homotopic by the morphism $s$ such that every $s_n$ factors through a module $L_n$ of $\mathcal{L}$.
\end{proof}

\begin{prop}\label{prop-QF}
	The following conditions are equivalent.
	\begin{enumerate}
		\item For every exact complex $M$ and every bounded complex above $N$, if for every $n \in \mathbb{Z}$, $N_{n} \in {{\underline {\mathfrak{Pr}}}_{R-\Mod}}^{-1}(Z_{n-1}(M))$, then $ N \in {{\underline {\mathfrak{Pr}}}_{\C(R)}}^{-1}(M)$.
		\item For every exact complex $M$ and every module $N$, if there exists $n \in \mathbb{Z}$ such that $N \in {{\underline {\mathfrak{Pr}}}_{R-\Mod}}^{-1}(Z_{n-1}(M))$, then  $\underline{N}[n] \in {{\underline {\mathfrak{Pr}}}_{\C(R)}}^{-1}(M)$.
		\item $R$ is quasi-Fr\"obenius.
	\end{enumerate}
\end{prop}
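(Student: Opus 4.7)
The plan is to prove the cycle $(1) \Rightarrow (2) \Rightarrow (3) \Rightarrow (1)$. The first implication is a direct specialisation: given the data of (2), apply (1) to the bounded complex $\underline{N}[n]$. In each degree $k \ne n$ the term is $0$, which lies in every subprojectivity domain, while in degree $k = n$ the condition is exactly the hypothesis of (2); the conclusions of (1) and (2) coincide.

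For $(2) \Rightarrow (3)$ I would show that every projective module is injective, one of the standard characterisations of quasi-Fr\"obenius rings. Given $Q$ projective, a monomorphism $\iota : A \hookrightarrow B$, and a morphism $\phi : A \to Q$, form the exact complex
$$M : \quad \cdots \to 0 \to A \xrightarrow{\iota} B \xrightarrow{\pi} B/A \to 0 \to \cdots$$
with $A$ placed in degree $2$, so $Z_1(M) = A$. Projectivity of $Q$ makes $Q \in {\underline{\mathfrak{Pr}}}_{R-\Mod}^{-1}(A)$ automatic, whence (2) yields $\underline{Q}[2] \in {\underline{\mathfrak{Pr}}}_{\C(R)}^{-1}(M)$. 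The chain map $f : M \to \underline{Q}[2]$ with $f_2 = \phi$ must then factor as $f = g h$ through a projective complex $P = \bigoplus_k \overline{Q_k}[k]$. Decomposing $P_n = Q_{n-1} \oplus Q_n$ and computing its differentials, the chain-map identity $g_2 d_3^P = 0$ forces $g_2$ to vanish on the $Q_2$-summand, while $h_1 \iota = d_2^P h_2$ routes the remaining component through $B$; combining, one reads off an explicit morphism $B \to Q$ extending $\phi$ along $\iota$. By Baer's criterion, $Q$ is injective.

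For $(3) \Rightarrow (1)$ I would invoke Lemma \ref{lem-nul3} with $\mathcal{L}$ the class of projective modules. Since $R$ is QF, projectives coincide with injectives, so any exact $M$ is automatically $\Hom_R(-, \mathcal{L})$-exact; the hypothesis of (1) says precisely that every morphism $Z_{n-1}(M) \to N_n$ factors through some $L_n \in \mathcal{L}$. The lemma then furnishes, for each $f : M \to N$, a null-homotopy whose components factor as $s_n = \alpha_n \beta_n$ through projective modules $L_n$. From this data I construct the projective complex $P = \bigoplus_n \overline{L_n}[n-1]$ and define chain maps $h : M \to P$, $g : P \to N$ out of the $\alpha_n, \beta_n$ (with signs matched to the shift convention) realising $f = g h$; this gives $N \in {\underline{\mathfrak{Pr}}}_{\C(R)}^{-1}(M)$.

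The main obstacle I anticipate is the concrete bookkeeping in $(2) \Rightarrow (3)$: one has to follow the chain-map constraints on $g$ and $h$ carefully enough to see a genuine extension $B \to Q$ of $\phi$ emerge, rather than merely a factorisation of $\phi$ through a projective module (which is already built into the hypothesis). The sign juggling involved in constructing $P$, $h$, and $g$ in $(3) \Rightarrow (1)$ is a standard but fussy secondary matter.
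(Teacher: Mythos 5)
Your proposal is correct and follows essentially the same route as the paper: the same cycle $(1)\Rightarrow(2)\Rightarrow(3)\Rightarrow(1)$, the same device of a three-term exact complex together with the disk structure of projective complexes for $(2)\Rightarrow(3)$, and the same appeal to Lemma \ref{lem-nul3} with $\mathcal{L}$ the class of projectives (which coincide with the injectives over a quasi-Fr\"obenius ring, giving the required $\Hom_R(-,\mathcal{L})$-exactness) for $(3)\Rightarrow(1)$. The only cosmetic differences are that the paper establishes injectivity of a projective $P$ by splitting a fixed embedding $P\to E$ into an injective (showing $\Hom_R(i,M)$ is epic for all $M$) rather than extending along an arbitrary monomorphism as you do, and that it cites \cite[Lemma 3.10]{Subcom} for the passage from a null-homotopy with projectively-factoring components to a factorization through a projective complex, a step you re-derive by hand.
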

\begin{proof}
	$1. \Rightarrow 2.$ is clear.\\
	$2. \Rightarrow 3.$ Let $P$ be a projective module and $i:P \to E$ be a monomorphism with $E$ is injective. Let us prove that $\Hom_R(i,M):\Hom_R(E,M) \to \Hom_R(P,M)$ is epic for every module $M$. For let $f:P \to M$ be a morphism of modules and consider the exact complex $X: \, \cdots \to 0 \to P \to E \to C \to 0 \to \cdots$ with $P$ is in the  $0$ position. Then,  $\underline{M}$ holds in the subprojectivity domain of $X$ by assumption. Thus, there exist two morphisms of complexes $\beta: X \to Q$ and $\alpha:Q\to \underline{M}$ such that $Q$ is projective and $\alpha\beta=\phi$ where $\phi_0=f$ and $\phi_i=0$ otherwise. We have $\alpha_0 d_1^Q=0$, hence there exists a morphism $h:Z_{-1}(Q) \to M$ such that $h \epsilon_0^Q=\alpha_0$. Since $Q$ is projective, the morphism $\mu_0^Q:Z_{-1}(Q) \to Q_{-1}$ splits, that is, there exists a morphism $\nu_0^Q: Q_{-1} \to Z_{-1}(Q)$ such that $\nu_0^Q \mu_0^Q=\id$. Then, $$h \nu_0^Q \beta_{-1} i= h \nu_0^Q d_0^Q \beta_0= h \nu_0^Q \mu_0^Q \epsilon_0^Q \beta_0= h \epsilon_0^Q \beta_0= \alpha_0 \beta_0= \phi_0=f.$$ Thus, $\Hom_R(i,M):\Hom_R(E,M) \to \Hom_R(P,M)$ is epic for every module $M$. Therefore, $P$ is injective.\\
	$3. \Rightarrow1.$ Let $M$ be an exact complex and $N$ a bounded complex above such that, for every $n \in \mathbb{Z}$, $N_{n} \in {{\underline {\mathfrak{Pr}}}_{R-\Mod}}^{-1}(Z_{n-1}(M))$. Hence, every morphism $Z_{n-1}(M) \to N_n$ factors through a projective module. Then, by Lemma \ref{lem-nul3}, every morphism of complexes $M \to N$ is null-homotopic by a morphism $s$ such that every $s_n:M_n \to N_{n+1}$ factors through a projective module. Then, by \cite[Lemma 3.10]{Subcom}, every morphism of complexes $M \to N$ factors through a projective complex. Therefore, $ N \in {{\underline {\mathfrak{Pr}}}_{\C(R)}}^{-1}(M)$.
\end{proof}

Now, we are interested in the study of  the subprojectivity domain of   contractible complexes. It is  inspired by  the fact that projective complexes are contractibles.  

The following result treats a particular case which will be useful.

\begin{lem} \label{lem-disc}
	Let $n \in \mathbb{Z}$, $N$ be a complex and $M$ a module. Then,  $N \in  {{\underline {\mathfrak{Pr}}}_{\C(R)}}^{-1}(\overline{M}[n])$ if and only if $N_{n+1} \in  {{\underline {\mathfrak{Pr}}}_{R-\Mod}}^{-1}(M)$.
\end{lem}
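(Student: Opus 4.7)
The plan is to exploit that morphisms out of the disk complex $\overline{M}[n]$ are completely determined by a single module map $M \to N_{n+1}$, so that the lemma reduces to a factorization statement at the module level. Concretely, a chain map $f : \overline{M}[n] \to N$ has zero components outside degrees $n$ and $n+1$, and the chain condition forces $f_n = (-1)^n d^N_{n+1}\circ f_{n+1}$. Conversely, any module map $g : M \to N_{n+1}$ extends to such a chain map (since $d^N_n\circ d^N_{n+1} = 0$), yielding a natural bijection between $\Hom_{\C(R)}(\overline{M}[n], N)$ and $\Hom_R(M, N_{n+1})$.

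Next I would record the classical fact that for a projective module $P$ the disk $\overline{P}[n]$ is a projective complex (contractible with projective cycle $P$), and conversely every projective complex has projective terms. The direction $(\Rightarrow)$ is then immediate: a factorization $\overline{M}[n] \xrightarrow{\beta} Q \xrightarrow{\alpha} N$ with $Q$ projective restricts in degree $n+1$ to $f_{n+1} = \alpha_{n+1}\circ\beta_{n+1}$, a factorization through the projective module $Q_{n+1}$.

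For the converse, given $f_{n+1} = \alpha'\circ\beta'$ with $\beta' : M \to P$, $\alpha' : P \to N_{n+1}$ and $P$ projective, I would take $Q = \overline{P}[n]$ and define $\beta : \overline{M}[n] \to \overline{P}[n]$ by $\beta_{n+1} = \beta_n = \beta'$, and $\alpha : \overline{P}[n] \to N$ by $\alpha_{n+1} = \alpha'$ and $\alpha_n = (-1)^n d^N_{n+1}\circ\alpha'$. A direct verification confirms both are chain maps (the only non-trivial condition, $d^N_n\circ\alpha_n = 0$, follows from $d^N_n\circ d^N_{n+1} = 0$) and that $\alpha\circ\beta$ matches $f$ in degrees $n$ and $n+1$.

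No serious obstacle is expected; the argument is essentially mechanical and the only point requiring attention is keeping the sign $(-1)^n$ from the shift convention consistent when verifying that $\alpha$ commutes with differentials. The content of the lemma is really that the disk $\overline{P}[n]$ is tailor-made to transport a module-level factorization to the complex level.
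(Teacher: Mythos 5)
Your proof is correct and follows essentially the same route as the paper: restrict the factorization to degree $n+1$ for the forward direction, and transport a module-level factorization through $P$ to a complex-level one through the disk $\overline{P}[n]$ for the converse. You are in fact slightly more careful than the paper's own proof, which omits the sign $(-1)^n$ coming from the shift convention in the definition of the degree-$n$ component of the map $\overline{P}[n]\to N$.
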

\begin{proof}
	Suppose that $N \in  {{\underline {\mathfrak{Pr}}}_{\C(R)}}^{-1}(\overline{M}[n])$ and let $f:M \to N_{n+1}$ be a morphism of modules.  Then, by assumption,  there exists a morphism of complexes $\overline{f}: \overline{M}[n] \to N$ such that $\overline{f}_{n+1}=f$. $\overline{f}: \overline{M}[n] \to N$ factors through a projective complex $P$. Hence, $f:M \to N_{n+1}$ factors through the projective module $P_{n+1}$.\\
	Conversely, Suppose that $N_{n+1} \in  {{\underline {\mathfrak{Pr}}}_{R-\Mod}}^{-1}(M)$ and let $f: \overline{M}[n] \to N$ be a morphism of complexes. Then, by assumption, there exists two morphisms $\alpha:P \to N_{n+1}$ and $\beta : M\to P$ such that $f_{n+1}=\alpha\beta$. We define two morphisms of complexes $g: \overline{P}[n] \to N$ and $h: \overline{M}[n] \to \overline{P}[n]$ such that $g_{n+1}=\alpha$,  $g_{n}=d_{n+1}^N\alpha$,  $h_{n+1}=h_n=\beta$, and $g_m=h_m=0$ otherwise. It is clear that $f=gh$. Therefore, $N \in  {{\underline {\mathfrak{Pr}}}_{\C(R)}}^{-1}(\overline{M}[n])$. 
\end{proof}

\begin{prop}\label{shift-disc}
Let $N$ be a complex and $\{M_n\}_{n \in \mathbb{Z}}$ be a family of modules. Then, we get that $N \in {{\underline {\mathfrak{Pr}}}_{\C(R)}}^{-1}(\oplus_{n \in \mathbb{Z}}\overline{M_n}[n])$ if and only if $N_{n+1} \in  {{\underline {\mathfrak{Pr}}}_{R-\Mod}}^{-1}(M_n)$ for every $n \in \mathbb{Z}$.	
\end{prop}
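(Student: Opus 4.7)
The plan is to reduce both directions to Lemma \ref{lem-disc}, which already handles a single summand of the form $\overline{M}[n]$, by exploiting the standard behavior of subprojectivity domains and of projectives under direct sums.

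For the forward direction, I would observe that each complex $\overline{M_n}[n]$ is a direct summand of $\bigoplus_{n \in \mathbb{Z}}\overline{M_n}[n]$ via the canonical inclusion/projection. Hence, if $N \in {{\underline {\mathfrak{Pr}}}_{\C(R)}}^{-1}(\bigoplus_{n\in\mathbb{Z}}\overline{M_n}[n])$, then for each fixed $n_0\in\mathbb{Z}$ and each morphism $g:\overline{M_{n_0}}[n_0]\to N$ I can extend $g$ by zero on the other summands to get $\tilde g:\bigoplus_{n}\overline{M_n}[n]\to N$ whose restriction to the $n_0$-th summand recovers $g$. By assumption $\tilde g$ factors through a projective complex, and pre-composing that factorization with the inclusion of the $n_0$-th summand gives a factorization of $g$ through a projective complex. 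Thus $N\in{{\underline {\mathfrak{Pr}}}_{\C(R)}}^{-1}(\overline{M_{n_0}}[n_0])$, and Lemma \ref{lem-disc} yields $N_{n_0+1}\in{{\underline {\mathfrak{Pr}}}_{R-\Mod}}^{-1}(M_{n_0})$.

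For the converse, given any morphism $f:\bigoplus_{n\in\mathbb{Z}}\overline{M_n}[n]\to N$, I would denote by $f^{(n)}=f\circ \iota_n:\overline{M_n}[n]\to N$ its composition with the canonical inclusion $\iota_n$. The hypothesis $N_{n+1}\in{{\underline {\mathfrak{Pr}}}_{R-\Mod}}^{-1}(M_n)$ together with Lemma \ref{lem-disc} gives for each $n$ a projective complex $P^{(n)}$ and morphisms $\beta^{(n)}:\overline{M_n}[n]\to P^{(n)}$ and $\alpha^{(n)}:P^{(n)}\to N$ with $f^{(n)}=\alpha^{(n)}\beta^{(n)}$. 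Setting $P=\bigoplus_{n\in\mathbb{Z}}P^{(n)}$ (which is projective, as direct sums of projective complexes are projective), the universal property of the direct sum produces $\beta:\bigoplus_n\overline{M_n}[n]\to P$ and $\alpha:P\to N$ such that $\alpha\beta=f$, giving the required factorization.

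I do not expect any serious obstacle: once Lemma \ref{lem-disc} is in hand, the argument amounts to the observation that the class $\{X:N\in{{\underline {\mathfrak{Pr}}}_{\C(R)}}^{-1}(X)\}$ is closed under direct summands and direct sums, which in turn rests only on the universal property of $\bigoplus$ and the fact that a direct sum of projective complexes is projective. The only minor point worth being careful about is to verify, in the converse direction, that the morphism $\alpha$ assembled from the $\alpha^{(n)}$ is well defined as a morphism of complexes, which is immediate from the componentwise definition of $P$ and its differential.
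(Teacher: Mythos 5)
Your proof is correct and takes essentially the same route as the paper: the paper's one-line argument cites \cite[Proposition 2.16]{SubprojAb} for the fact that the subprojectivity domain of a direct sum coincides with that of the family of its summands, and then applies Lemma \ref{lem-disc} to each $\overline{M_n}[n]$, which is exactly the reduction you perform. The only difference is that you verify the direct-sum step by hand (extension by zero for one direction, assembling the factorizations through the projective complex $\bigoplus_n P^{(n)}$ for the other), which is a correct inline proof of the cited result.
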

\begin{proof}
	We have ${{\underline {\mathfrak{Pr}}}_{\C(R)}}^{-1}(\oplus_{n \in \mathbb{Z}}\overline{M_n}[n]) = {{\underline {\mathfrak{Pr}}}_{\C(R)}}^{-1}(\{\overline{M_n}[n]\ n \in \mathbb{Z}\})$, by $\cite[Proposition 2.16]{SubprojAb}$, and we conclude by Lemma \ref{lem-disc}.
\end{proof}

Now, we turn to the second aim of this section which consists of studying  the subprojectivity counterpart of the flateness. Namely, for a bounded complex $M$ and an exact complex $N$, we ask  whether $ N \in {{\underline {\mathfrak{Pr}}}_{\C(R)}}^{-1}(M)$ if $Z_{n}(N) \in {{\underline {\mathfrak{Pr}}}_{R-\Mod}}^{-1}(M_n)$ for every $n \in \mathbb{Z}$. Next, we will show that   this holds for every bounded complex below $M$. 

\begin{lem}\label{lem-nul2}
	Let $\mathcal{L}$ be a class of modules, $M$ a bounded complex below and $N$ an exact complex which is $\Hom_R(\mathcal{L}, -)$-exact. If every morphism $M_n \to Z_n(N)$ factors through a module in $\mathcal{L}$ then every morphism of complexes $M \to N$ is null-homotopic by a morphism $s$ such that every $s_n:M_n \to N_{n+1}$ factors through a module of $\mathcal{L}$. 
\end{lem}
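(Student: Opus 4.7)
The plan is to mirror the proof of Lemma~\ref{lem-nul3}, but run the construction in the opposite direction. Since $M$ is bounded below, fix $b$ with $M_n=0$ for $n<b$, set $s_n=0$ for $n<b$, and build the homotopy $s_n:M_n\to N_{n+1}$ inductively \emph{upward} in $n$, always ensuring that $s_n$ factors through some $L_n\in\mathcal{L}$ and that $f_n=d^N_{n+1}s_n+s_{n-1}d^M_n$.

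For the base step $n=b$: since $d^M_b=0$, we have $d^N_b f_b=f_{b-1}d^M_b=0$, so $f_b:M_b\to N_b$ factors through the inclusion $Z_b(N)\hookrightarrow N_b$, yielding $\tilde{f}_b:M_b\to Z_b(N)$. By the hypothesis, $\tilde{f}_b=\beta_b\alpha_b$ for some $\alpha_b:M_b\to L_b$ and $\beta_b:L_b\to Z_b(N)$ with $L_b\in\mathcal{L}$. The key step is to lift the composite $L_b\to Z_b(N)\hookrightarrow N_b$ to a morphism $\tilde{\beta}_b:L_b\to N_{b+1}$ satisfying $d^N_{b+1}\tilde{\beta}_b$ equal to this composite. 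Such a lift exists because the composite is annihilated by $d^N_b$, hence is a cycle in $\Hom_R(L_b,N)$; exactness of $\Hom_R(L_b,N)$ then makes it a boundary. Taking $s_b=\tilde{\beta}_b\alpha_b$ settles the base case.

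For the inductive step $n>b$: given $s_i$ for $i<n$ satisfying the homotopy relation, a short computation using the inductive hypothesis together with $d^M_{n-1}d^M_n=0$ gives $d^N_n(f_n-s_{n-1}d^M_n)=0$. Hence $f_n-s_{n-1}d^M_n$ factors through $Z_n(N)$ as $\tilde{g}_n:M_n\to Z_n(N)$; applying the hypothesis to $\tilde{g}_n$ and repeating the lifting argument above produces $s_n:M_n\to N_{n+1}$ factoring through some $L_n\in\mathcal{L}$ and satisfying $d^N_{n+1}s_n=f_n-s_{n-1}d^M_n$, as required.

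The main point to verify carefully is the interplay between the two exactness hypotheses: one needs $N$ exact to split $f_n-s_{n-1}d^M_n$ through a cycle, and simultaneously $\Hom_R(L_n,N)$ exact to lift maps out of $L_n$ into cycles back up to $N_{n+1}$. The bounded-below assumption on $M$ is essential to start the induction; without it the construction has no well-defined initial position. Everything else reduces to a routine diagram chase dual to that carried out in Lemma~\ref{lem-nul3}.
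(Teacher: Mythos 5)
Your proposal is correct and follows essentially the same route as the paper's proof: induct upward from the lower bound of $M$, factor $f_n-s_{n-1}d^M_n$ through $Z_n(N)$ by the kernel property, split that map through some $L_n\in\mathcal{L}$ by hypothesis, and use exactness of $\Hom_R(L_n,N)$ to lift to $N_{n+1}$. The only differences are cosmetic (keeping the bound $b$ instead of normalizing it to $0$, and phrasing the lift as ``cycle equals boundary in $\Hom_R(L_n,N)$'' rather than via the canonical factorization through $\Im d^N_{n+1}=Z_n(N)$).
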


\begin{proof}
	Let $f: M \to N$ be a morphism of complexes. We are going to construct a family of morphisms $s_n:M_n \to N_{n+1}$, such that $f_n=d^{N}_{n+1}s_n+s_{n-1}d_n^{M}$. We suppose that $M_n=0$ for every $n<0$, then $d_0^N f_0=0$, so there exists a morphism $t_0: M_0 \to Z_0(N)$ such that $\mu_1^Nt_0=f_0$. By assumption, there exist two morphisms $\beta_0 : M_0 \to L_0$ and $\alpha_0 : L_0 \to Z_{0}(N)$ with $L_0 \in \mathcal{L}$ and $t_0=  \alpha_0 \beta_0$. Since $N$ is $\Hom_R(\mathcal{L}, -)$-exact, there exists a morphism $\gamma_0:L_0\to N_1$ such that $\epsilon_1^N \gamma _0=\alpha_0$.
	$$
	\xymatrix{
		\cdots\ar[rr]^{d_{2}^M}&&M_{1}\ar[rr]^{d_{1}^M}\ar[dd]^{f_{1}}&&M_{0}\ar[rr]\ar[dd]^{f_0} \ar[ld]_{\beta_0}\ar[lddd]^{t_0}&& 0 \ar[dd]\ar[rr]&&\cdots\\
		&&&L_0\ar[ld]_{\gamma_0}\ar[dd]_{\alpha_0}&&&&&\\
		\cdots\ar[rr]^{d_{1}^N}&&N_{1}\ar[rr]^{d_{1}^N}\ar[rd]_{\epsilon_{1}^N}&&N_{0}\ar[rr]^{d_{0}^N}&& N_{-1}\ar[rr]&&\cdots\\
		&&&Z_0(N)\ar[rd]\ar[ru]_{\mu^N_1}&&&&&\\
		&&0\ar[ru]&&0&&&& \\
	}
	$$
	Let $s_0=\gamma_0 \beta_0$. One can check that $d_1^Ns_0=f_0$, hence $d_1^Ns_0d_1^M=f_0d_1^M= d_1^Nf_1$. Thus, there exists a morphism $t_1: M_1 \to Z_1(N)$ such that $\mu_2^Nt_1=f_1-s_0d_1^M$. By assumption, there exist two morphisms $\beta_1 : M_1 \to L_1$ and $\alpha_1 : L_1 \to Z_{1}(N)$ with $L_1 \in \mathcal{L}$ and $t_1=  \alpha_1 \beta_1$. Since $N$ is $\Hom_R(\mathcal{L}, -)$-exact, there exists a morphism $\gamma_1:L_1\to N_2$ such that $\epsilon_2^N \gamma _1=\alpha_1$. Let $s_1=\gamma_1 \beta_1$, then $d_2^Ns_1= \mu_2^N \epsilon_2^N \gamma_1 \beta_1=\mu_2^N \alpha_1 \beta_1=\mu_2^Nt_1=f_1-s_0d_1^M$. Using the same arguments we construct $s_n:M_n \to N_{n+1}$, such that $f_n=d^{N}_{n+1}s_n+s_{n-1}d_n^{M}$, for any $n\geqslant0$. For $n<0$, we take $s_n=0$.
\end{proof}

Now, we are in position to prove the   desired result.

\begin{thm}\label{thm-bound}
	Let $N$ be an exact complex and $M$ a bounded below complex. Then, if every $Z_n(N) \in  {{\underline {\mathfrak{Pr}}}_{R-\Mod}}^{-1}(M_n)$, then $N \in  {{\underline {\mathfrak{Pr}}}_{\C(R)}}^{-1}(M)$.
\end{thm}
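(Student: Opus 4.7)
The plan is to mirror the strategy of Proposition~\ref{prop-QF} (which handled the dual situation), specializing Lemma~\ref{lem-nul2} to the class $\mathcal{L}=\mathcal{P}_{R-\Mod}$ of projective modules, and then invoking \cite[Lemma 3.10]{Subcom} to upgrade the null-homotopy conclusion to factorization through a projective complex.

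Concretely, let $f\colon M\to N$ be any morphism of complexes. We want to factor $f$ through a projective complex, i.e.\ show $N\in{\underline{\mathfrak{Pr}}}_{\C(R)}^{-1}(M)$. First I would check that $N$ is $\Hom_R(\mathcal{P}_{R-\Mod},-)$-exact: this is immediate because projective modules preserve exactness under $\Hom_R(P,-)$, and $N$ is exact by hypothesis. Second, the assumption $Z_n(N)\in{\underline{\mathfrak{Pr}}}_{R-\Mod}^{-1}(M_n)$ unwinds (by \cite[Proposition 2.7]{SubprojAb}, as recalled in the introduction) to: every morphism $M_n\to Z_n(N)$ factors through a projective module, i.e.\ through an object of $\mathcal{L}$. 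Finally, $M$ is bounded below by hypothesis, which is exactly the standing assumption in Lemma~\ref{lem-nul2}.

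With these three hypotheses verified, Lemma~\ref{lem-nul2} (applied to $\mathcal{L}=\mathcal{P}_{R-\Mod}$) yields a null-homotopy $s$ of $f$ such that each component $s_n\colon M_n\to N_{n+1}$ factors through a projective module. At this point the job is already essentially done: \cite[Lemma 3.10]{Subcom} asserts precisely that if $M\to N$ is null-homotopic via a homotopy whose components all factor through projective modules, then the morphism itself factors through a projective complex. Applying it gives the desired factorization of $f$, completing the proof.

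I do not expect any serious obstacle here, since Lemma~\ref{lem-nul2} has been engineered to isolate exactly the homological bookkeeping needed, and the reduction $\mathcal{L}=\mathcal{P}_{R-\Mod}$ matches the hypothesis perfectly. The only mild point to be careful about is making sure that the projective module through which $M_n\to Z_n(N)$ factors can indeed be taken as a single $L_n\in\mathcal{L}$ for each $n$ (so that the inductive construction in Lemma~\ref{lem-nul2} can proceed), but this is automatic from the definition of subprojectivity. In contrast to Proposition~\ref{prop-QF}, no quasi-Fröbenius assumption is needed, because here we are lifting from $M_n$ to $N_{n+1}$ along an epimorphism $N_{n+1}\twoheadrightarrow Z_n(N)$ provided directly by the exactness of $N$, rather than trying to extend a morphism off a cycle.
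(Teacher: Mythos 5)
Your argument is correct and coincides with the paper's own proof: both specialize Lemma~\ref{lem-nul2} to the class of projective modules (using exactness of $N$ for the $\Hom_R(\mathcal{P},-)$-exactness hypothesis) and then invoke \cite[Lemma 3.10]{Subcom} to convert the null-homotopy with projective-factoring components into a factorization through a projective complex. No issues.
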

\begin{proof}  Suppose that  $Z_n(N) \in  {{\underline {\mathfrak{Pr}}}_{R-\Mod}}^{-1}(M_n)$ for every $n \in \mathbb{Z}$. Then, every morphism $M_n \to Z_n(N)$ factors through a projective module. Thus, by Lemma \ref{lem-nul2}, every morphism $f:M \to N$ is null-homotopic by a morphism $s$ such that each $s_n$ factors through a projective module. Then, every $f:M \to N$ factors through a projective complex, by \cite[Lemma 3.10]{Subcom}. Thus, $N \in  {{\underline {\mathfrak{Pr}}}_{\C(R)}}^{-1}(M)$.	
\end{proof}
Recall that the subprojectivity domain of the class of all finitely presented modules is the class of all flat  modules (see \cite[Proposition 2.18]{SubprojAb}). In particular, the subprojectivity domain of a finitely presented module contains the class of all flat modules. So,  by Theorem \ref{thm-bound}, we get the following consequences.

\begin{cor}
	The subprojectivity domain of a bounded complex below of finitely presented modules contains the class of all flat complexes.
\end{cor}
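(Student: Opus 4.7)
The plan is to unwind the definitions and then invoke Theorem~\ref{thm-bound} directly. Fix a bounded below complex $M$ all of whose terms $M_n$ are finitely presented $R$-modules, and let $N$ be an arbitrary flat complex. The goal is to show $N \in {{\underline {\mathfrak{Pr}}}_{\C(R)}}^{-1}(M)$.

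First I would use the standard characterization of flat complexes: a complex is flat if and only if it is exact and all of its cycle modules $Z_n(N)$ are flat $R$-modules. Thus $N$ is exact, and $Z_n(N)$ is a flat module for every $n \in \mathbb{Z}$. Next, I would apply the module-level fact recalled just before the corollary, namely \cite[Proposition 2.18]{SubprojAb}: the subprojectivity domain of any finitely presented module contains the class of all flat modules. Since each $M_n$ is finitely presented and each $Z_n(N)$ is flat, this gives
\[
Z_n(N) \in {{\underline {\mathfrak{Pr}}}_{R-\Mod}}^{-1}(M_n) \qquad \text{for every } n \in \mathbb{Z}.
\]

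Finally I would plug these observations into Theorem~\ref{thm-bound} with the roles assigned as in its statement: $N$ is an exact complex, $M$ is a bounded below complex, and $Z_n(N) \in {{\underline {\mathfrak{Pr}}}_{R-\Mod}}^{-1}(M_n)$ for every $n$. The theorem then yields $N \in {{\underline {\mathfrak{Pr}}}_{\C(R)}}^{-1}(M)$, which is precisely the desired containment.

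There is essentially no obstacle here; the statement is a formal corollary that just packages the module-level inclusion $\mathcal{F}_{R-\Mod}\subseteq {{\underline {\mathfrak{Pr}}}_{R-\Mod}}^{-1}(M_n)$ with the transfer mechanism provided by Theorem~\ref{thm-bound}. The only point one needs to be a little careful about is that the theorem applies in the direction where $N$ is exact and $M$ is bounded below, which matches the hypothesis of the corollary (not the other way around), so no shifting or truncation argument is needed.
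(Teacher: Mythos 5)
Your argument is correct and is exactly the one the paper intends: the corollary is stated without proof precisely because it follows from the recalled fact that flat modules lie in the subprojectivity domain of every finitely presented module, the characterization of flat complexes as exact complexes with flat cycles, and Theorem~\ref{thm-bound}. No further comment is needed.
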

\begin{cor}
	The subprojectivity domain of a bounded complex below of projective modules contains the class of all exact complexes.
\end{cor}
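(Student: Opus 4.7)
The plan is to apply Theorem \ref{thm-bound} directly, after making one essentially trivial observation about the subprojectivity domain of a projective module. Specifically, recall from the definition that an object $M$ is $N$-subprojective if every morphism $M \to N$ lifts through every epimorphism $B \twoheadrightarrow N$; when $M$ is projective, this lifting property is automatic. Consequently, for every projective module $P$ one has ${{\underline {\mathfrak{Pr}}}_{R-\Mod}}^{-1}(P) = R\text{-Mod}$, so that \emph{any} module $X$ satisfies $X \in {{\underline {\mathfrak{Pr}}}_{R-\Mod}}^{-1}(P)$.

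With this in hand, I would proceed as follows. Let $M$ be a bounded below complex each of whose terms $M_n$ is projective, and let $N$ be an exact complex. For every $n \in \mathbb{Z}$, the cycle $Z_n(N)$ is a module, and since $M_n$ is projective, the observation above yields $Z_n(N) \in {{\underline {\mathfrak{Pr}}}_{R-\Mod}}^{-1}(M_n)$. Thus the hypotheses of Theorem \ref{thm-bound} are satisfied: $N$ is exact, $M$ is bounded below, and $Z_n(N) \in {{\underline {\mathfrak{Pr}}}_{R-\Mod}}^{-1}(M_n)$ for every $n$. The theorem then immediately gives $N \in {{\underline {\mathfrak{Pr}}}_{\C(R)}}^{-1}(M)$, which is exactly the claim that $N$ belongs to the subprojectivity domain of $M$.

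Since $N$ was an arbitrary exact complex, this shows that the subprojectivity domain of $M$ contains the class of all exact complexes, as desired. There is no genuine obstacle in this argument: the entire content is the reduction from the complex level to the module level provided by Theorem \ref{thm-bound}, together with the elementary fact that a projective module has the whole module category as its subprojectivity domain. The only subtlety worth flagging in the write-up is simply to make explicit why projectivity of each $M_n$ forces the module-level hypothesis of Theorem \ref{thm-bound} to hold trivially for every choice of $N$.
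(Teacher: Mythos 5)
Your argument is correct and is exactly the route the paper intends: the corollary is stated as an immediate consequence of Theorem \ref{thm-bound}, using the standard fact (noted in the introduction as ${{\underline {\mathfrak{Pr}}}_{\A}}^{-1}(\mathcal{P}_{\A})=\A$) that a projective module's subprojectivity domain is all of $R$-Mod, so the hypothesis $Z_n(N)\in{{\underline {\mathfrak{Pr}}}_{R-\Mod}}^{-1}(M_n)$ holds trivially. No gaps.
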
 
The following example shows that Theorem \ref{thm-bound} fails without assuming the condition ``$M$  is bounded below''.

\begin{ex}\label{exm-bounded}
	If $R$ is   a   quasi-Fr\"obenius ring which is not  semisimple, then there exist a non bounded complex $P$ and an exact complex $E$ such that $E$ does not hold in ${{\underline {\mathfrak{Pr}}}_{\C(R)}}^{-1}(P)$ and $Z_n(E) \in  {{\underline {\mathfrak{Pr}}}_{R-\Mod}}^{-1}(P_n)$ for every $n \in \mathbb{Z}$.
\end{ex}
\begin{proof}
	Let $M$ be a non projective module and $N$ be a module such that $N$ does not hold in ${{\underline {\mathfrak{Pr}}}_{R-\Mod}}^{-1}(M)$ ($N$ exists since $M$ is not projective). Let $E$ be an exact complex with $E_{1}=N$, $E_{n}=0$ for every $n>1$ and $E_n$ is injective for every $n<1$. Let $P$ be an other exact complex with projective components and $Z_0(P)=M$ (we can construct such a complex since $R$ is quasi-Fr\"obenius). Since the components of $P$ are projectives, it is clear that for every $n \in \mathbb{Z}$, $Z_n(E) \in  {{\underline {\mathfrak{Pr}}}_{R-\Mod}}^{-1}(P_n)$. Now, suppose that $E \in  {{\underline {\mathfrak{Pr}}}_{\C(R)}}^{-1}(P)$ and let $f:M \to N$ be a morphism of modules. We construct a morphism of complexes $g:P \to E$ as follows
	$$
	\xymatrix{ 
		\cdots \ar[r]& P_{2} \ar[rr]^{d^P_{2}}\ar[dd]_{g_2=0}&& P_{1} \ar[rr]^{d^P_{1}}\ar[dd]_{g_1=f{\epsilon_1^P}}\ar[rd]_{\epsilon_1^P}&& P_{0} \ar[rr]^{d^P_{0}}\ar[dd]_{g_0}\ar[rd]_{\epsilon_0^P}&& P_{-1} \ar[r]\ar[dd]_{g_{-1}}& \cdots\\
		&&&&M\ar[ru]_{\mu_1^P}\ar[ld]^f&&Z_1(P)\ar[ru]_{\mu_0^P}\ar[dd]_{z_1}&&\\
		\cdots \ar[r]& 0 \ar[rr]&& N \ar[rr]_{d^E_{1}}&& E_0 \ar[rr]_{d^E_{0}}\ar[rd]_{\epsilon_0^E}&& E_{-1} \ar[r]& \cdots\\
		&&&&&&Z_1(E)\ar[ru]_{\mu_0^E}&&
	}
	$$
	where $g_0: P_0 \to E_0$ exists such that $g_0 \mu_1^P=d_1^E f$ since $E_0$ is injective (one can verifies that $g_0 d_1^P=d^E_1g_1$), $z_1: Z_1(P) \to Z_1(E)$ exists such that $z_1 \epsilon_0^P=\epsilon_0^E g_0$ by the universal property of cokernels, $g_{-1}: P_{-1} \to E_{-1}$ exists such that $g_{-1} \mu_0^P=\mu_0^E z_1$ since $E_{-1}$ is injective. It is clear that $g_{-1} d_0^P=d^E_0g_0$. Using the same arguments we construct $g_n: P_n \to E_n$ such that $g_{n} d_{n+1}^P=d^E_{n+1}g_{n+1}$ for every $n < 0$. For every $n>1$ we take $g_n=0$. Now, the morphism of complexes $g:P \to E$ factors through a projective complex $Q$ since we supposed that $E \in  {{\underline {\mathfrak{Pr}}}_{\C(R)}}^{-1}(P)$. Let $\beta : P \to Q$ and  $\alpha: Q \to E$ be two morphisms of complexes such that $f=\alpha \beta$. Consider the following commutative diagram
	$$
	\xymatrix{
		0 \ar[r]& M  \ar[r]^{\mu_1^P}\ar[d]_\gamma& P_0  \ar[r]^{d_0^P}\ar[d]_{\beta_0}& P_{-1}  \ar[r]\ar[d]_{\beta_{-1}}& \cdots\\
		0 \ar[r]& Z_0(Q)  \ar[r]^{\mu_1^Q}\ar[d]_\delta & Q_0  \ar[r]^{d_0^Q}\ar[d]_{\alpha_0}& Q_{-1}  \ar[r]\ar[d]_{\alpha_{-1}}& \cdots\\
		0 \ar[r]& N  \ar[r]^{d_1^E}& E_0  \ar[r]^{d_0^E}& E_{-1}  \ar[r]& \cdots\\
	}
	$$
	The morphisms $\gamma: M \to Z_0(Q)$ and $\delta : Z_0(Q) \to N$ exist and make the diagram commute, by the universal property of kernels. We claim that $f= \delta \gamma$. Indeed, $d_1^E f \epsilon_1^P=d_1^E g_1= g_0 d_1^P= \alpha_0 \beta_0 \mu_1^P \epsilon_1^P=  \alpha_0  \mu_1^Q \gamma \epsilon_1^P= d_1^E \delta \gamma \epsilon_1^P$, then $f= \delta \gamma$. Thus, any morphism $f:M \to N$ factors through a projective module. Then,  $N \in {{\underline {\mathfrak{Pr}}}_{R-\Mod}}^{-1}(M)$ which is not the case. Therefore, $E$ does not hold in ${{\underline {\mathfrak{Pr}}}_{\C(R)}}^{-1}(P)$ even if $Z_n(E) \in  {{\underline {\mathfrak{Pr}}}_{R-\Mod}}^{-1}(P_n)$ for every $n \in \mathbb{Z}$.
\end{proof}

The following example shows that the reverse implication of Theorem \ref{thm-bound} does not hold true in general.

\begin{ex}\label{exm-inv-main1}
	Let $R$ be an IF ring which is not von Neumann regular, then there exists a finitely presented module which is not projective, so there exists a module $K$ which does not belong to the subprojectivity domain of $M$. Consider an exact complex $N: \cdots \to 0 \to K \to E \to C \to 0 \to \cdots$ such that $K$ is in position $2$ and $E$ is an injective module. Since $M$ is finitely presented (then every flat module holds in its subprojectivity domain) and $R$ is an $IF$-ring,  $E \in  {{\underline {\mathfrak{Pr}}}_{R-\Mod}}^{-1}(M)$. Then, $N \in  {{\underline {\mathfrak{Pr}}}_{\C(R)}}^{-1}(\overline{M})$ (as we will see in  Lemma \ref{lem-disc}). However,  $Z_1(N)=K$ does not belong to ${{\underline {\mathfrak{Pr}}}_{R-\Mod}}^{-1}(M)$.
\end{ex}

% % % % % % % % % % % % % % % % % % % % % % % % % % % % % % % % % % % % % %
% % % % % % % % % % % % % % % % % % % % % % % % % % % % % % % % % % % % % %
% % % % % % % % % % % % % % % % % % % % % % % % % % % % % % % % % % % % % %
% % % % % % % % % % % % % % % % % % % % % % % % % % % % % % % % % % % % % %
% % % % % % % % % 
% % % % % % % % %  Subprojectivity domains of complexes induced by 
% % % % % % % % %   subprojectivity domains of modules
% % % % % % % % % 
% % % % % % % % % % % % % % % % % % % % % % % % % % % % % % % % % % % % % %
% % % % % % % % % % % % % % % % % % % % % % % % % % % % % % % % % % % % % %
% % % % % % % % % % % % % % % % % % % % % % % % % % % % % % % % % % % % % %
% % % % % % % % % % % % % % % % % % % % % % % % % % % % % % % % % % % % % %
\section{Subprojectivity domains of classes of complexes}\label{Sec 5}

In this section, we deal with the subprojectivity domain of classes of complexes.  We will answer questions raised at the end of the introduction. The first main result, which  answers the first question,  is given as follows:
 
\begin{thm}\label{thm-5-1}
	Let $\mathcal{L}$ and $\mathcal{G}$ be two classes of modules such that $\mathcal{L}$ is closed under extensions and $0 \in \mathcal{L} \cap \mathcal{G}$. Then,  the following conditions are equivalent.
	\begin{enumerate}
		\item $ {{\underline {\mathfrak{Pr}}}_{R-\Mod}}^{-1}(\mathcal{L})=\mathcal{G}$ and  $\Hom_{\K(R)}(M, N)=0$ for any $N \in \#\mathcal{G}$ and $M \in \mathcal{\widetilde{L}}$.
		\item  ${{\underline {\mathfrak{Pr}}}_{\C(R)}}^{-1}(\mathcal{\widetilde{L}})=\#\mathcal{G}$.
	\end{enumerate}
\end{thm}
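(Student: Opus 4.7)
The plan is to ground both implications on Theorem~3.11 of \cite{Subcom} (which, under a componentwise subprojectivity hypothesis, identifies complex subprojectivity with vanishing of $\Hom_{\K(R)}$) together with Lemma~\ref{lem-disc}. A small observation is used throughout: since $\mathcal{L}$ is closed under extensions and every $M \in \mathcal{\widetilde{L}}$ is exact with $Z_n(M) \in \mathcal{L}$, the short exact sequences $0 \to Z_{n+1}(M) \to M_{n+1} \to Z_n(M) \to 0$ force $M_n \in \mathcal{L}$ for every $n \in \mathbb{Z}$. Likewise, the hypothesis $0 \in \mathcal{L} \cap \mathcal{G}$ guarantees that the stalk $\underline{K}$ and the disc $\overline{K}$ belong to $\#\mathcal{G}$ whenever $K \in \mathcal{G}$, and that every $\overline{L}[n]$ belongs to $\mathcal{\widetilde{L}}$ whenever $L \in \mathcal{L}$.

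For $(1) \Rightarrow (2)$ I would first prove $\#\mathcal{G} \subseteq {{\underline {\mathfrak{Pr}}}_{\C(R)}}^{-1}(\mathcal{\widetilde{L}})$: take $N \in \#\mathcal{G}$ and $M \in \mathcal{\widetilde{L}}$; the observation plus ${{\underline {\mathfrak{Pr}}}_{R-\Mod}}^{-1}(\mathcal{L}) = \mathcal{G}$ gives $N_{n+1} \in {{\underline {\mathfrak{Pr}}}_{R-\Mod}}^{-1}(M_n)$ for every $n$, and combined with $\Hom_{\K(R)}(M,N) = 0$, Theorem~3.11 of \cite{Subcom} yields $N \in {{\underline {\mathfrak{Pr}}}_{\C(R)}}^{-1}(M)$. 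For the reverse inclusion, given $N \in {{\underline {\mathfrak{Pr}}}_{\C(R)}}^{-1}(\mathcal{\widetilde{L}})$ and arbitrary $L \in \mathcal{L}$, the complex $\overline{L}[n-1]$ lies in $\mathcal{\widetilde{L}}$ (its only nonzero cycle is $L$ in degree $n-1$), so Lemma~\ref{lem-disc} produces $N_n \in {{\underline {\mathfrak{Pr}}}_{R-\Mod}}^{-1}(L)$; varying $L$ gives $N_n \in \mathcal{G}$.

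For $(2) \Rightarrow (1)$ I would first derive the module-level equality. The inclusion $\mathcal{G} \subseteq {{\underline {\mathfrak{Pr}}}_{R-\Mod}}^{-1}(\mathcal{L})$ comes from applying $(2)$ to the stalk $\underline{K} \in \#\mathcal{G}$ against the disc complexes $\overline{L}[-1] \in \mathcal{\widetilde{L}}$ and invoking Lemma~\ref{lem-disc}. The converse ${{\underline {\mathfrak{Pr}}}_{R-\Mod}}^{-1}(\mathcal{L}) \subseteq \mathcal{G}$ is treated differently: given $K \in {{\underline {\mathfrak{Pr}}}_{R-\Mod}}^{-1}(\mathcal{L})$, I would show $\overline{K} \in {{\underline {\mathfrak{Pr}}}_{\C(R)}}^{-1}(\mathcal{\widetilde{L}})$ by a direct factorisation. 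Any $f: M \to \overline{K}$ with $M \in \mathcal{\widetilde{L}}$ is determined by $f_0: M_0 \to K$ with $f_1 = f_0 d_1^M$; since $M_0 \in \mathcal{L}$, one factors $f_0 = \alpha\beta$ through a projective module $P$, and then the chain map $M \to \overline{P}$ with components $\beta$ in degree $0$ and $\beta d_1^M$ in degree $1$, followed by the chain map $\overline{P} \to \overline{K}$ equal to $\alpha$ in both degrees, composes to $f$; as $\overline{P}$ is a projective complex this gives $\overline{K} \in {{\underline {\mathfrak{Pr}}}_{\C(R)}}^{-1}(\mathcal{\widetilde{L}}) = \#\mathcal{G}$, hence $K = \overline{K}_0 \in \mathcal{G}$. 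The $\Hom_{\K(R)}$-vanishing half of $(1)$ is then automatic: the established equality forces the componentwise hypothesis of Theorem~3.11 of \cite{Subcom}, while $(2)$ provides the complex-level subprojectivity, so the equivalence of that theorem produces $\Hom_{\K(R)}(M,N) = 0$ for every $M \in \mathcal{\widetilde{L}}$ and $N \in \#\mathcal{G}$.

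The main obstacle I expect is precisely the inclusion ${{\underline {\mathfrak{Pr}}}_{R-\Mod}}^{-1}(\mathcal{L}) \subseteq \mathcal{G}$ in the direction $(2) \Rightarrow (1)$. The naive stalk $\underline{K}$ does not work there: testing $\underline{K}$ against a generic $M \in \mathcal{\widetilde{L}}$ via Theorem~3.11 of \cite{Subcom} would require showing $\Hom_{\K(R)}(M,\underline{K}) = 0$, and a null-homotopy amounts to extending a map $Z_{-1}(M) \to K$ along the inclusion $Z_{-1}(M) \hookrightarrow M_{-1}$, which is not guaranteed by a mere factorisation through a projective. Switching to $\overline{K}$ dissolves this obstruction because a morphism into $\overline{K}$ is entirely recovered from a single module map out of $M_0$, allowing the factorisation to be assembled explicitly through $\overline{P}$ without any extension step.
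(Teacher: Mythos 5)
Your proof is correct and follows essentially the same route as the paper's: both directions rest on \cite[Theorem 3.11]{Subcom} together with disc-complex reductions (Lemma~\ref{lem-disc}, of which Proposition~\ref{shift-disc} is the version the paper invokes), with closure of $\mathcal{L}$ under extensions supplying $M_n\in\mathcal{L}$ for $M\in\mathcal{\widetilde{L}}$. The only cosmetic differences are that you reprove the degree-zero disc lemma (\cite[Lemma 3.7]{Subcom}) by hand via the explicit factorisation through $\overline{P}$, and that you obtain the $\Hom_{\K(R)}$-vanishing in $(2)\Rightarrow(1)$ from Theorem 3.11 after first establishing the module-level equality, whereas the paper quotes directly that a morphism factoring through a projective complex is null-homotopic.
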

\begin{proof} 
	$1. \Rightarrow 2.$ Let $N \in {{\underline {\mathfrak{Pr}}}_{\C(R)}}^{-1}(\mathcal{\widetilde{L}})$ and $M \in \mathcal{L}$. Then, $\oplus_{n\in \mathbb{Z}}\overline{M}[n] \in \mathcal{\widetilde{L}}$. Hence,  $ N \in {{\underline {\mathfrak{Pr}}}_{\C(R)}}^{-1}(\oplus_{n\in \mathbb{Z}}\overline{M}[n])$. Then, by Proposition \ref{shift-disc}, for every $n \in \mathbb{Z}$, $N_n \in {{\underline {\mathfrak{Pr}}}_{R-\Mod}}^{-1}(M)$. Thus, for every $n \in \mathbb{Z}$, $N_n \in {{\underline {\mathfrak{Pr}}}_{R-\Mod}}^{-1}(\mathcal{L})=\mathcal{G}$. Then,   ${{\underline {\mathfrak{Pr}}}_{\C(R)}}^{-1}(\mathcal{\widetilde{L}}) \subseteq \#\mathcal{G}$. Conversely, let $N \in \#\mathcal{G}$ and $M \in \mathcal{\widetilde{L}}$. Then,  for every $n \in \mathbb{Z}$, $N_n \in \mathcal{G}$ and $M_n \in \mathcal{L}$ since $\mathcal{L}$ is closed under extensions. Then,  for every $n \in \mathbb{Z}$, $N_{n+1} \in {{\underline {\mathfrak{Pr}}}_{R-\Mod}}^{-1}(M_n)$. Then,  $N \in {{\underline {\mathfrak{Pr}}}_{\C(R)}}^{-1}(M)$,  by \cite[Theorem 3.11]{Subcom}, since  $\Hom_{\K(R)}(M, N)=0$. Then,  $N \in {{\underline {\mathfrak{Pr}}}_{\C(R)}}^{-1}(\mathcal{\widetilde{L}})$. Therefore,  ${{\underline {\mathfrak{Pr}}}_{\C(R)}}^{-1}(\mathcal{\widetilde{L}})=\#\mathcal{G}$.\\
	$2. \Rightarrow 1.$ let $N \in \#\mathcal{G}$ and $M \in \mathcal{\widetilde{L}}$, then  $N \in {{\underline {\mathfrak{Pr}}}_{\C(R)}}^{-1}(M)$, hence every morphism $M \to N$ factors through a projective complex, then every morphism $M \to N$ is null homotopic (see  \cite[Corollary 3.5]{Gillespie2}). Thus,  $\Hom_{\K(R)}(M, N)=0$. Now, let us prove that $ {{\underline {\mathfrak{Pr}}}_{R-\Mod}}^{-1}(\mathcal{L})=\mathcal{G}$. For let $N \in {{\underline {\mathfrak{Pr}}}_{R-\Mod}}^{-1}(\mathcal{L})$ and $M \in \mathcal{\widetilde{L}}$, then $N \in {{\underline {\mathfrak{Pr}}}_{R-\Mod}}^{-1}(M_0)$ since every $M_0 \in \mathcal{L}$ ($\mathcal{L}$ is closed under extensions). Then,  $\overline{N} \in {{\underline {\mathfrak{Pr}}}_{\C(R)}}^{-1}(M)$ by  \cite[Lemma 3.7]{Subcom}. Then,  $\overline{N} \in {{\underline {\mathfrak{Pr}}}_{\C(R)}}^{-1}(\mathcal{\widetilde{L}})=\#\mathcal{G}$. Thus, $N \in \mathcal{G}$. Therefore,  $ {{\underline {\mathfrak{Pr}}}_{R-\Mod}}^{-1}(\mathcal{L})\subseteq \mathcal{G}$. Conversely, let $N \in \mathcal{G}$ and $M \in \mathcal{L}$, hence $\overline{N} \in \#\mathcal{G}$ and $\overline{G} \in \mathcal{\widetilde{L}}$. Then,  $\overline{N}  \in {{\underline {\mathfrak{Pr}}}_{\C(R)}}^{-1}(\overline{M})$ by assumption. Then,  by  \cite[Lemma 3.7]{Subcom}, $N \in {{\underline {\mathfrak{Pr}}}_{R-\Mod}}^{-1}(M)$, then $N \in {{\underline {\mathfrak{Pr}}}_{R-\Mod}}^{-1}(\mathcal{L})$. Therefore, $ {{\underline {\mathfrak{Pr}}}_{R-\Mod}}^{-1}(\mathcal{L})=\mathcal{G}$.
\end{proof}

Now, the second main result of this section is as follows : 

\begin{thm}
	\label{thm-5-2}
	Let $\mathcal{L}$ and $\mathcal{G}$ be two classes of modules such that $0,R \in \mathcal{L}$. Then,  the following conditions are equivalent.
	\begin{enumerate}
		\item $ \mathcal{G}={{\underline {\mathfrak{Pr}}}_{R-\Mod}}^{-1}(\mathcal{L})$ and  $\Hom_{\K(R)}(M, N)=0$ for any $M \in \#\mathcal{L}$ and $N \in \mathcal{\widetilde{G}}$.
		\item  ${{\underline {\mathfrak{Pr}}}_{\C(R)}}^{-1}(\#\mathcal{L})= \mathcal{\widetilde{G}}$ .
	\end{enumerate}
\end{thm}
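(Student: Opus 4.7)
The plan is to follow the scheme of the proof of Theorem~\ref{thm-5-1}, but now testing subprojectivity against $\#\mathcal{L}$ rather than $\widetilde{\mathcal{L}}$. The new ingredient is the classical fact that every projective complex is contractible with projective cycles, which allows one to recover information about $Z_n(N)$ (and not just $N_n$) from a factorization through a projective complex.

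For $(1)\Rightarrow(2)$, I would prove the two inclusions separately. To see that ${\underline {\mathfrak{Pr}}}_{\C(R)}^{-1}(\#\mathcal{L})\subseteq\widetilde{\mathcal{G}}$, fix $N$ on the left. The hypothesis $0,R\in\mathcal{L}$ places each stalk $\underline{R}[n]$ in $\#\mathcal{L}$, so by \cite[Corollary 3.17]{Subcom} the complex $N$ is exact. To check $Z_n(N)\in\mathcal{G}$, I would take $M\in\mathcal{L}$ and an arbitrary morphism $f:M\to Z_n(N)$; composing with $\mu_{n+1}^N$ promotes $f$ to a morphism of complexes $\tilde f:\underline{M}[n]\to N$, and since $\underline{M}[n]\in\#\mathcal{L}$ this $\tilde f$ factors as $\phi\psi$ through a projective complex $Q$. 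The complex-compatibility in degree $n$ forces $d_n^Q\psi_n=0$, so $\psi_n$ lands in $Z_n(Q)$, which is projective, while the restriction of $\phi_n$ sends $Z_n(Q)$ into $Z_n(N)$; this yields the required factorization of $f$ through a projective module.

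For the reverse inclusion $\widetilde{\mathcal{G}}\subseteq{\underline {\mathfrak{Pr}}}_{\C(R)}^{-1}(\#\mathcal{L})$, a standard lift-and-correct argument shows that each domain ${\underline {\mathfrak{Pr}}}_{R-\Mod}^{-1}(L)$ is closed under extensions, hence so is $\mathcal{G}$. Applying this to the short exact sequences $0\to Z_n(N)\to N_n\to Z_{n-1}(N)\to 0$ coming from the exactness of $N\in\widetilde{\mathcal{G}}$ gives $N_n\in\mathcal{G}$ for every $n$. Consequently $N_{n+1}\in{\underline {\mathfrak{Pr}}}_{R-\Mod}^{-1}(M_n)$ for every $M\in\#\mathcal{L}$, and \cite[Theorem 3.11]{Subcom} combined with the assumed vanishing $\Hom_{\K(R)}(M,N)=0$ gives $N\in{\underline {\mathfrak{Pr}}}_{\C(R)}^{-1}(M)$.

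The implication $(2)\Rightarrow(1)$ is handled as in Theorem~\ref{thm-5-1}. The homotopy vanishing for $M\in\#\mathcal{L}$ and $N\in\widetilde{\mathcal{G}}$ is immediate from \cite[Corollary 3.5]{Gillespie2} once one has $N\in{\underline {\mathfrak{Pr}}}_{\C(R)}^{-1}(M)$. For the identity ${\underline {\mathfrak{Pr}}}_{R-\Mod}^{-1}(\mathcal{L})=\mathcal{G}$ one transports between modules and complexes via the disc construction, using that $\overline{M}\in\#\mathcal{L}$ iff $M\in\mathcal{L}$ (thanks to $0\in\mathcal{L}$) and $\overline{N}\in\widetilde{\mathcal{G}}$ iff $N\in\mathcal{G}$, together with Lemma~\ref{lem-disc}. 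The only verification that is not a direct transport is that for $N\in{\underline {\mathfrak{Pr}}}_{R-\Mod}^{-1}(\mathcal{L})$ the disc $\overline{N}$ sits in ${\underline {\mathfrak{Pr}}}_{\C(R)}^{-1}(\#\mathcal{L})$: any morphism $f:M\to\overline{N}$ with $M\in\#\mathcal{L}$ is determined by its degree-zero component $f_0:M_0\to N$, and factoring $f_0$ through a projective $P$ assembles into a factorization of $f$ through the projective complex $\overline{P}$. The main obstacle I anticipate is the cycle-extraction step in $(1)\Rightarrow(2)$, since it relies crucially on the non-trivial fact that $Z_n(Q)$ is projective for any projective complex $Q$; everything else reduces to direct diagram chases or to invocations of Lemma~\ref{lem-disc}, \cite[Theorem 3.11]{Subcom} and \cite[Corollary 3.5]{Gillespie2}.
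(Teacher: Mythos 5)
Your proof is correct and follows essentially the same route as the paper: exactness of $N$ via the stalks $\underline{R}[n]$ and \cite[Corollary 3.17]{Subcom}, the cycle condition via stalks $\underline{L}[n]$, closure of $\mathcal{G}$ under extensions to pass from cycles to terms before invoking \cite[Theorem 3.11]{Subcom}, and transport through disc complexes for $(2)\Rightarrow(1)$. The only difference is cosmetic: where the paper cites \cite[Lemma 3.6]{Subcom} and \cite[Lemma 3.7]{Subcom}, you inline direct arguments (using that projective complexes have projective cycles, and the $\overline{P}$ assembly), which are correct.
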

\begin{proof}
	$1. \Rightarrow 2.$ Let  $N \in {{\underline {\mathfrak{Pr}}}_{\C(R)}}^{-1}(\#\mathcal{L})$, then  $N \in {{\underline {\mathfrak{Pr}}}_{\C(R)}}^{-1}(\underline{R}[n])$ for every $n \in \mathbb{Z}$, so $N$ is exact by \cite[Corollary 3.17]{Subcom}. Now, let $L \in \mathcal{L}$, then  $N \in {{\underline {\mathfrak{Pr}}}_{\C(R)}}^{-1}(\underline{L}[n])$ for every $n \in \mathbb{Z}$. Then, by \cite[Lemma 3.6]{Subcom}, $ Z_n(N) \in {{\underline {\mathfrak{Pr}}}_{R-\Mod}}^{-1}(L)$ for every  $L \in \mathcal{L}$ and $n\in \mathbb{Z}$. Then,  $ Z_n(N) \in {{\underline {\mathfrak{Pr}}}_{R-\Mod}}^{-1}(\mathcal{L})= \mathcal{G}$ for every $n \in \mathbb{Z}$. Thus, $ N \in \mathcal{\widetilde{G}}$. Conversely, let $N \in \mathcal{\widetilde{G}}$ and $M \in \#\mathcal{L}$. For every $n \in \mathbb{Z}$, $Z_n(N) \in \mathcal{G}= {{\underline {\mathfrak{Pr}}}_{R-\Mod}}^{-1}(\mathcal{L})$, hence $N_n \in {{\underline {\mathfrak{Pr}}}_{R-\Mod}}^{-1}(\mathcal{L})$ since ${{\underline {\mathfrak{Pr}}}_{R-\Mod}}^{-1}(\mathcal{L})$ is closed under extensions. Then,  $N \in {{\underline {\mathfrak{Pr}}}_{\C(R)}}^{-1}(L)$ by \cite[Theorem 3.11]{Subcom} since $\Hom_{\K(R)}(M,N)=0$ by assumption. Thus, $N \in {{\underline {\mathfrak{Pr}}}_{\C(R)}}^{-1}(\#\mathcal{L})$.\\	
	$2. \Rightarrow 1.$ Let $N \in {{\underline {\mathfrak{Pr}}}_{R-\Mod}}^{-1}(\mathcal{L})$, then for every $M \in \#\mathcal{L}$, $N \in {{\underline {\mathfrak{Pr}}}_{R-\Mod}}^{-1}(M_0)$. Then,  for every $M  \in \#\mathcal{L}$, $\overline{N} \in {{\underline {\mathfrak{Pr}}}_{\C(R)}}^{-1}(M)$ (see  \cite[Lemma 3.7]{Subcom}). Therefore, $\overline{N} \in {{\underline {\mathfrak{Pr}}}_{\C(R)}}^{-1}(\#\mathcal{L})=\mathcal{\widetilde{G}}$, hence $N\in \mathcal{G}$. Conversely, let $N \in \mathcal{G}$ and $M \in \mathcal{L}$ then, $\overline{N} \in \mathcal{\widetilde{G}}$ and $\overline{M} \in \#\mathcal{L}$. Then,  $\overline{N} \in {{\underline {\mathfrak{Pr}}}_{\C(R)}}^{-1}(\overline{M})$ by assumption. Then,  $N \in {{\underline {\mathfrak{Pr}}}_{R-\Mod}}^{-1}(M)$ by  \cite[Lemma 3.7]{Subcom}. Then,  $N \in {{\underline {\mathfrak{Pr}}}_{R-\Mod}}^{-1}(\mathcal{L})$.
\end{proof}

Recall that an object $M$ of an abelian category $\mathscr A$ is said to be Gorenstein projective, if  there exists an exact and $\Hom_R(-, \mathcal{P}_\A)$-exact complex of projective objects $$\cdots \to P_{-1} \to P_0 \to P_{1} \to \cdots$$ such that $M =\Ker(P_0 \to P_1)$ (see \cite[Definition 10.2.1]{Enochs}). We use $\mathcal{GP_\A}$ to denote the class of all Gorenstein projective objects of $\A$. The authors in \cite{Liu}, proved that the class $\mathcal{GP}_{\C(R)}^\perp$ is inside the class of exact complexes with cycles in $\mathcal{GP}_{R-\Mod}^\perp$, that is the class $\mathcal{\widetilde{GP_{R-\Mod}^\perp}}$  (see \cite[Proposition 3.4 and Remark 3.3]{Liu}). Here, we show  when these two classes coincide.

\begin{prop}\label{pro-GP}
	For any ring,  $\mathcal{GP}_{\C(R)}^\perp =\mathcal{\widetilde{GP_{R-\Mod}^\perp}}$ if and only if $\Hom_{\K(R)}(M, N)=0$ for any Gorenstein projective complex $M$ and any complex $N \in \mathcal{\widetilde{GP_{R-\Mod}^\perp}}$.
\end{prop}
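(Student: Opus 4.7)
The plan is to leverage \cite[Theorem 3.11]{Subcom} as the bridge between the null-homotopy condition $\Hom_{\K(R)}(M,N)=0$ and the subprojectivity condition $N\in{{\underline{\mathfrak{Pr}}}_{\C(R)}}^{-1}(M)$, once termwise subprojectivity in $R$-$\Mod$ is in hand. To transfer between the two sides of the proposition I first establish the key identification ${{\underline{\mathfrak{Pr}}}}^{-1}(\mathcal{GP})=\mathcal{GP}^\perp$, valid in both $R$-$\Mod$ and $\C(R)$. Given $M\in\mathcal{GP}$, fit it into $0\to M\xrightarrow{i} P\to M'\to 0$ coming from a complete projective resolution ($P$ projective, $M'\in\mathcal{GP}$). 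The inclusion $\mathcal{GP}^\perp\subseteq{{\underline{\mathfrak{Pr}}}}^{-1}(\mathcal{GP})$ is immediate: $\Ext^1(M',N)=0$ allows any $f:M\to N$ to extend along $i$, hence to factor through the projective $P$. For the reverse inclusion, suppose $M$ is $N$-subprojective and write $f=hq$ with $q:M\to Q$, $h:Q\to N$, $Q$ projective. Pushing out the sequence along $q$ yields $0\to Q\to Y\to M'\to 0$, which splits because $\Ext^1(M',Q)=0$ (as $M'\in\mathcal{GP}$ and $Q$ is projective); composing the induced $P\to Y$ with the splitting and with $h$ produces an extension of $f$ to $P$, so $\Ext^1(M',N)=0$, and varying $M'$ over all Gorenstein projectives gives $N\in\mathcal{GP}^\perp$.

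Forward direction. Assume $\mathcal{GP}_{\C(R)}^\perp=\widetilde{\mathcal{GP}_{R-\Mod}^\perp}$. For any $M\in\mathcal{GP}_{\C(R)}$ and $N\in\widetilde{\mathcal{GP}_{R-\Mod}^\perp}=\mathcal{GP}_{\C(R)}^\perp$, the key identification applied inside $\C(R)$ shows that $M$ is $N$-subprojective, so every morphism $M\to N$ factors through a projective complex and is therefore null-homotopic by \cite[Corollary 3.5]{Gillespie2}, giving $\Hom_{\K(R)}(M,N)=0$.

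Reverse direction. Liu's result supplies $\mathcal{GP}_{\C(R)}^\perp\subseteq\widetilde{\mathcal{GP}_{R-\Mod}^\perp}$, so only the reverse inclusion needs the hypothesis. Fix $N\in\widetilde{\mathcal{GP}_{R-\Mod}^\perp}$ and $M\in\mathcal{GP}_{\C(R)}$. By the standard degreewise description, each component $M_n$ is a Gorenstein projective module. Each $N_n$ sits in $0\to Z_n(N)\to N_n\to Z_{n-1}(N)\to 0$ with both cycles in $\mathcal{GP}_{R-\Mod}^\perp$, and since this $\Ext^1$-orthogonal is closed under extensions, $N_n\in\mathcal{GP}_{R-\Mod}^\perp={{\underline{\mathfrak{Pr}}}_{R-\Mod}}^{-1}(\mathcal{GP}_{R-\Mod})$; in particular $N_{n+1}\in{{\underline{\mathfrak{Pr}}}_{R-\Mod}}^{-1}(M_n)$ for every $n$. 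Combining the assumed $\Hom_{\K(R)}(M,N)=0$ with \cite[Theorem 3.11]{Subcom} yields $N\in{{\underline{\mathfrak{Pr}}}_{\C(R)}}^{-1}(M)$, which the key identification upgrades to $N\in\mathcal{GP}_{\C(R)}^\perp$.

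The main obstacle is the preparatory identification ${{\underline{\mathfrak{Pr}}}}^{-1}(\mathcal{GP})=\mathcal{GP}^\perp$: one inclusion drops out of a complete resolution, but the other relies on the pushout manoeuvre above (equivalently, a Tate-cohomology shift). A secondary homological ingredient borrowed from outside the paper's framework is the fact that the components of a Gorenstein projective complex are themselves Gorenstein projective modules, which is needed to apply the termwise subprojectivity hypothesis of \cite[Theorem 3.11]{Subcom}.
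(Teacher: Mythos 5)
Your argument is correct, but it follows a more self-contained route than the paper's, which disposes of the proposition in three lines as a direct corollary of Theorem \ref{thm-5-2}: the paper takes $\mathcal{L}=\mathcal{GP}_{R-\Mod}$ and $\mathcal{G}=\mathcal{GP}_{R-\Mod}^\perp$, cites \cite[Corollary 2.28]{SubprojAb} for the identification ${{\underline {\mathfrak{Pr}}}}^{-1}(\mathcal{GP})=\mathcal{GP}^\perp$ in both $R-\Mod$ and $\C(R)$, and \cite[Theorem 2.2]{Yang} for $\#\mathcal{GP_{R-\Mod}}=\mathcal{GP}_{\C(R)}$. You do three things differently. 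First, you reprove the identification ${{\underline {\mathfrak{Pr}}}}^{-1}(\mathcal{GP})=\mathcal{GP}^\perp$ from scratch via the pushout-and-splitting argument on a piece $0\to M\to P\to M'\to 0$ of a complete resolution; this is exactly the content of the cited Corollary 2.28, and your argument for it is sound (the key points --- that $\Ext^1(M',N)=0$ reduces to extending maps $M\to N$ along $M\to P$, and that every Gorenstein projective arises as such an $M'$ --- are both in order). Second, you unfold the content of Theorem \ref{thm-5-2} by hand, feeding the termwise hypothesis of \cite[Theorem 3.11]{Subcom} with the extension-closure of $\mathcal{GP}_{R-\Mod}^\perp$ applied to $0\to Z_n(N)\to N_n\to Z_{n-1}(N)\to 0$; this is essentially what the proof of Theorem \ref{thm-5-2} does internally. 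Third, for the containment $\mathcal{GP}_{\C(R)}^\perp\subseteq\mathcal{\widetilde{GP_{R-\Mod}^\perp}}$ you import Liu's result, whereas the paper's route through Theorem \ref{thm-5-2} rederives that containment from \cite[Corollary 3.17]{Subcom} and \cite[Lemma 3.6]{Subcom} (exactness and cycle subprojectivity are detected by the shifted complexes $\underline{R}[n]$ and $\underline{L}[n]$, using $R\in\mathcal{GP}_{R-\Mod}$), quoting \cite{Liu} only as motivation. What your approach buys is independence from the general machinery of Section \ref{Sec 5} and an explicit, transparent proof of the subprojectivity-domain computation for Gorenstein projectives; what it costs is the extra external input from \cite{Liu} and a longer exposition for a statement the paper's framework was built to make immediate.
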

\begin{proof}
	We have ${{\underline {\mathfrak{Pr}}}_{R-\Mod}}^{-1}(\mathcal{GP_{R-\Mod}})= \mathcal{GP}_{R-\Mod}^\perp$ by \cite[Corollary 2.28]{SubprojAb}, then, by Theorem \ref{thm-5-2}, ${{\underline {\mathfrak{Pr}}}_{\C(R)}}^{-1}(\#\mathcal{GP_{R-\Mod}})= \mathcal{\widetilde{GP_{R-\Mod}^\perp}}$ if and only if $\Hom_{\K(R)}(M, N)=0$ for any $M \in \#\mathcal{{GP_{R-\Mod}}}$ and $N \in \mathcal{\widetilde{GP_{R-\Mod}^\perp}}$. But $\#\mathcal{GP_{R-\Mod}}= \mathcal{GP}_{\C(R)}$, by \cite[Theorem 2.2]{Yang}. Then,  ${{\underline {\mathfrak{Pr}}}_{\C(R)}}^{-1}(\#\mathcal{GP_{R-\Mod}})={{\underline {\mathfrak{Pr}}}_{\C(R)}}^{-1}(\mathcal{GP_{\C(R)}})= \mathcal{GP}_{\C(R)}^\perp$ (see \cite[Corollary 2.28]{SubprojAb}). Therefore, $\mathcal{GP}_{\C(R)}^\perp= \mathcal{\widetilde{GP_{R-\Mod}^\perp}}$ if and only if $\Hom_{\K(R)}(M, N)=0$ for any $M$ Gorenstein projective complex and any $N \in \mathcal{\widetilde{GP_{R-\Mod}^\perp}}$. 
\end{proof}

Now, we turn our attention to the third question.  In the study of this question, a new    type of classes appears naturally which are defined as follows:    First, we fix some notations and recall some facts: For complexes $X$ and $Y$, we let $\Hom^{\bullet}(X,Y)$ denote the complex of abelian groups with $$ \Hom^{\bullet}(X,Y)_{n}=\prod_{i\in \mathbb{ Z}}\Hom_R(X_{i},Y_{i+n})$$ and $$ d_{n}^{\Hom^{\bullet}(X,Y)}(\psi)=(d^Y_{i+n}\psi_i -(-1)^{n}\psi_{i-1}d_i^X)_{i\in \mathbb{ Z}}.$$ 
Note that for every $n \in \mathbb{Z}$, $$Z_n(\Hom^{\bullet}(X,Y))=\Hom_{\C(R)}(X[n],Y)=\Hom_{\C(R)}(X,Y[-n])$$ and $$H_n(\Hom^{\bullet}(X,Y))=\Hom_{\K(R)}(X[n],Y)=\Hom_{\K(R)}(X,Y[-n]).$$ For every complex $X$, $\Hom^{\bullet}(X,-)$ is a left exact functor from the category of complexes of modules to the category of complexes of abelian groups.

\begin{Def}
	Given a class of modules $\mathcal{L}$, a complex $X$ is said to be a $dg \mathcal{L}$ complex, if $X_n \in \mathcal{L}$, for each $n \in \mathbb{Z}$, and $\Hom^{\bullet}(X,G)$ is exact whenever $G$ is an exact complex with cycles in ${{\underline {\mathfrak{Pr}}}_{R-\Mod}}^{-1}(\mathcal{L})$. We denote the class of $dg \mathcal{L}$ complexes by  $dg\mathcal{\widetilde{L}}$.
\end{Def}

The terminology used in this definition is inspired from   $dg \mathcal{L}$ complexes introduced by Gillespie \cite{Gillespie}   based on the fact that,  when $(L,G)$ is the classical cotorsion pair $(Proj, R-Mod)$,  $dg\mathcal{\widetilde{L}}$  is nothing but the classical DG-projective complexes.

\begin{lem}\label{lem-dgL}
	If $\mathcal{L}$ is a class which contains $0$, then $\C^-(\mathcal{L})\subseteq dg\mathcal{\widetilde{L}}$.
\end{lem}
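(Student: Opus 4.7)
The plan is to reduce the exactness of $\Hom^{\bullet}(X,G)$ to a subprojectivity statement handled by Theorem \ref{thm-bound}, using the identity
$$H_n(\Hom^{\bullet}(X,G))=\Hom_{\K(R)}(X, G[-n])$$
recorded at the start of Section \ref{Sec 5} together with the null-homotopy criterion in the homotopy category.

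Fix $X \in \C^-(\mathcal{L})$ and an exact complex $G$ whose cycles $Z_n(G)$ all lie in $\mathcal{G}:={{\underline {\mathfrak{Pr}}}_{R-\Mod}}^{-1}(\mathcal{L})$. First I would check the ``componentwise'' clause in the definition of $dg\widetilde{\mathcal{L}}$: by definition of $\C^-(\mathcal{L})$ we have $X_n \in \mathcal{L}$ for every index where $X_n\neq 0$, and for the remaining (small) indices the assumption $0\in\mathcal{L}$ takes care of the zero terms. Thus $X_n\in\mathcal{L}$ for all $n\in\mathbb{Z}$. Next, since $X_n\in\mathcal{L}$ and $Z_n(G)\in\mathcal{G}={{\underline {\mathfrak{Pr}}}_{R-\Mod}}^{-1}(\mathcal{L})$, the very definition of the subprojectivity domain of a class gives $Z_n(G)\in{{\underline {\mathfrak{Pr}}}_{R-\Mod}}^{-1}(X_n)$ for every $n$. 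Because $X$ is bounded below and $G$ is exact, Theorem \ref{thm-bound} applies and yields $G\in{{\underline {\mathfrak{Pr}}}_{\C(R)}}^{-1}(X)$.

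To conclude, I would observe that any shift $G[-n]$ is still exact and has the same cycles (re-indexed), hence still in $\mathcal{G}$, so the previous step applies verbatim to each $G[-n]$. Therefore every morphism $X\to G[-n]$ factors through a projective complex, and by \cite[Corollary 3.5]{Gillespie2} such a morphism is null-homotopic, giving
$$\Hom_{\K(R)}(X,G[-n])=0 \quad \text{for every } n\in\mathbb{Z}.$$
Via the cohomology identity above, this means $H_n(\Hom^{\bullet}(X,G))=0$ for all $n$, i.e.\ $\Hom^{\bullet}(X,G)$ is exact, which together with $X_n\in\mathcal{L}$ for all $n$ says exactly that $X\in dg\widetilde{\mathcal{L}}$. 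No real obstacle appears here; the only points requiring attention are the bookkeeping use of $0\in\mathcal{L}$ to handle the zero terms of bounded below complexes, and the remark that shifts preserve all hypotheses imposed on $G$, which allows one to convert a single subprojectivity statement into the exactness of the whole Hom-complex.
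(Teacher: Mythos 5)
Your argument is correct. It is essentially the paper's proof with one extra layer of packaging: the paper applies Lemma \ref{lem-nul2} directly (with $\mathcal{L}$ taken to be the projective modules, noting that an exact complex with cycles in ${{\underline {\mathfrak{Pr}}}_{R-\Mod}}^{-1}(\mathcal{L})$ is automatically $\Hom_R(Proj,-)$-exact) to conclude that every morphism $X\to G$ is null-homotopic, whereas you route through Theorem \ref{thm-bound} to get $G\in{{\underline {\mathfrak{Pr}}}_{\C(R)}}^{-1}(X)$ and then convert the factorization through a projective complex back into null-homotopy via \cite[Corollary 3.5]{Gillespie2}. Since Theorem \ref{thm-bound} is itself proved from Lemma \ref{lem-nul2}, the underlying mechanism is identical; your version just makes a small round trip (null-homotopic $\to$ factors through a projective $\to$ null-homotopic). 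On the other hand, your proof is more explicit on one point the paper leaves implicit: exactness of $\Hom^{\bullet}(X,G)$ requires $\Hom_{\K(R)}(X,G[-n])=0$ for \emph{all} $n$, not just $n=0$, and your observation that each shift $G[-n]$ is again exact with cycles in $\mathcal{G}$ is exactly the bookkeeping needed to close that gap. Both the componentwise clause (using $0\in\mathcal{L}$) and the application of the subprojectivity-domain definition are handled correctly.
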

\begin{proof}
	Set $\mathcal{G}={{\underline {\mathfrak{Pr}}}_{R-\Mod}}^{-1}(\mathcal{L})$ and let $f:X \to N$ be a morphism of complexes with $N \in \mathcal{\widetilde{G}}$. $N$ is exact and $Z_n(N) \in {{\underline {\mathfrak{Pr}}}_{R-\Mod}}^{-1}(\mathcal{L})$, for each $n \in \mathbb{Z}$, hence $N$ is $\Hom_R(Proj, -)$-exact and every morphism $X_n \to Z_n(N)$ factors through a projective module. Therefore $f:X \to N$ is null-homotopic, by Lemma \ref{lem-nul2}. Thus, $\Hom^{\bullet}(X,N)$ is exact whenever $N$ is a $\mathcal{\widetilde{G}}$ complex.
\end{proof}

Now, our third main result of this section is given as follows.
\begin{thm} \label{thm-5-3}
	Let $\mathcal{L}$ and $\mathcal{G}$ be two classes of modules such that  $0,R \in \mathcal{L}$. Then,  the following conditions are equivalent.
	\begin{enumerate}
		\item  ${{\underline {\mathfrak{Pr}}}_{R-\Mod}}^{-1}(\mathcal{L})=\mathcal{G}$.
		\item $ {{\underline {\mathfrak{Pr}}}_{\C(R)}}^{-1}(\C^b(\mathcal{L}))=\mathcal{\widetilde{G}}.$
		\item $ {{\underline {\mathfrak{Pr}}}_{\C(R)}}^{-1}(\C^-(\mathcal{L}))=\mathcal{\widetilde{G}}.$
		\item ${{\underline {\mathfrak{Pr}}}_{\C(R)}}^{-1}(dg\mathcal{\widetilde{L}})=\mathcal{\widetilde{G}}.$
	\end{enumerate}	
\end{thm}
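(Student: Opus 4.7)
The plan is to establish $(1) \Rightarrow (i)$ uniformly for $i \in \{2,3,4\}$ via a single template, and to prove the three converses $(i) \Rightarrow (1)$ via a probe argument using the contractible complex $\overline{N}$. This strategy bypasses the need for a linear cycle (which would stumble at $(4) \Rightarrow (3)$) and leans on Lemma \ref{lem-dgL} to supply the chain of inclusions $\C^b(\mathcal{L}) \subseteq \C^-(\mathcal{L}) \subseteq dg\widetilde{\mathcal{L}}$.

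For $(1) \Rightarrow (4)$ (the template), I would establish ${{\underline {\mathfrak{Pr}}}_{\C(R)}}^{-1}(dg\widetilde{\mathcal{L}}) \subseteq \widetilde{\mathcal{G}}$ by probing with disc complexes: since $R \in \mathcal{L}$ and each $\underline{L}[n]$ with $L \in \mathcal{L}$ lies in $\C^b(\mathcal{L}) \subseteq dg\widetilde{\mathcal{L}}$, \cite[Corollary 3.17]{Subcom} will force $N$ to be exact while \cite[Lemma 3.6]{Subcom} will pin $Z_n(N)$ inside ${{\underline {\mathfrak{Pr}}}_{R-\Mod}}^{-1}(\mathcal{L}) = \mathcal{G}$. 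For the reverse inclusion I would take $N \in \widetilde{\mathcal{G}}$ and $M \in dg\widetilde{\mathcal{L}}$, then apply the standard closure of any subprojectivity domain under extensions to $0 \to Z_n(N) \to N_n \to Z_{n-1}(N) \to 0$ to upgrade $Z_n(N) \in \mathcal{G}$ to $N_n \in \mathcal{G}$, so that $N_{n+1} \in \mathcal{G} \subseteq {{\underline {\mathfrak{Pr}}}_{R-\Mod}}^{-1}(M_n)$ since $M_n \in \mathcal{L}$. The defining exactness property of $dg\widetilde{\mathcal{L}}$, applied to the exact complex $N$ whose cycles lie in $\mathcal{G} = {{\underline {\mathfrak{Pr}}}_{R-\Mod}}^{-1}(\mathcal{L})$, yields exactness of $\Hom^\bullet(M, N)$, hence $\Hom_{\K(R)}(M,N) = H_0(\Hom^\bullet(M, N)) = 0$, and then \cite[Theorem 3.11]{Subcom} delivers $N \in {{\underline {\mathfrak{Pr}}}_{\C(R)}}^{-1}(M)$. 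The implications $(1) \Rightarrow (3)$ and $(1) \Rightarrow (2)$ follow by the very same proof, because the probe disc complexes already sit in $\C^b(\mathcal{L})$ and the inclusion $\C^b(\mathcal{L}) \subseteq \C^-(\mathcal{L}) \subseteq dg\widetilde{\mathcal{L}}$ keeps the $\Hom^\bullet(M,N)$-exactness argument intact.

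For each converse $(i) \Rightarrow (1)$, I would invoke the contractible complex $\overline{N}$, for which $\Hom_{\K(R)}(M, \overline{N}) = 0$ automatically for every $M$. Given $N \in {{\underline {\mathfrak{Pr}}}_{R-\Mod}}^{-1}(\mathcal{L})$ and $M$ in one of $\C^b(\mathcal{L})$, $\C^-(\mathcal{L})$ or $dg\widetilde{\mathcal{L}}$, every $M_n$ belongs to $\mathcal{L}$ so $\overline{N}_{n+1} \in {{\underline {\mathfrak{Pr}}}_{R-\Mod}}^{-1}(M_n)$ for all $n$; \cite[Theorem 3.11]{Subcom} then gives $\overline{N} \in {{\underline {\mathfrak{Pr}}}_{\C(R)}}^{-1}(M)$, so $\overline{N}$ lies in the hypothesized $\widetilde{\mathcal{G}}$ and $N = Z_0(\overline{N}) \in \mathcal{G}$. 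The other inclusion $\mathcal{G} \subseteq {{\underline {\mathfrak{Pr}}}_{R-\Mod}}^{-1}(\mathcal{L})$ will follow by taking $N \in \mathcal{G}$ and $M \in \mathcal{L}$, noting that $\overline{N} \in \widetilde{\mathcal{G}}$ and $\overline{M} \in \C^b(\mathcal{L}) \subseteq \mathcal{C}_i$, so that $\overline{N} \in {{\underline {\mathfrak{Pr}}}_{\C(R)}}^{-1}(\overline{M})$ translates back to $N \in {{\underline {\mathfrak{Pr}}}_{R-\Mod}}^{-1}(M)$ via \cite[Lemma 3.7]{Subcom}.

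The hard part will be the $\supseteq$ direction of $(1) \Rightarrow (4)$: unlike in Theorems \ref{thm-5-1} and \ref{thm-5-2}, the vanishing $\Hom_{\K(R)}(M, N) = 0$ is not supplied as an ambient hypothesis and must be manufactured from the defining exactness condition of $dg\widetilde{\mathcal{L}}$ — which is precisely the raison d'\^etre of the $dg$ construction. Once that homotopy vanishing is extracted, everything else reduces to routine applications of the test complexes $\underline{R}[n]$, $\underline{L}[n]$, and $\overline{N}$ together with Theorem 3.11 and Lemmas 3.6 and 3.7 of \cite{Subcom}.
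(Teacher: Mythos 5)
Your proposal is correct and follows essentially the same route as the paper: disc-complex probes $\underline{R}[n]$, $\underline{L}[n]$ with \cite[Corollary 3.17]{Subcom} and \cite[Lemma 3.6]{Subcom} for the inclusion into $\mathcal{\widetilde{G}}$, closure of subprojectivity domains under extensions plus the $dg$-exactness of $\Hom^{\bullet}(M,N)$ and \cite[Theorem 3.11]{Subcom} for the reverse inclusion, and the $\overline{N}$, $\overline{M}$ probes for the converses. The only cosmetic difference is that you organize the equivalences as a hub from condition (1) (and derive the converses via Theorem 3.11 with the automatic homotopy vanishing into contractible complexes rather than citing \cite[Lemma 3.7]{Subcom} directly), whereas the paper chains $1\Leftrightarrow 4$ into $2$ and then $3$; the substance is the same.
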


\begin{proof}
	$1. \Rightarrow 4.$ Let us prove that $ {{\underline {\mathfrak{Pr}}}_{\C(R)}}^{-1}(dg\mathcal{\widetilde{L}}) \subseteq \mathcal{\widetilde{G}}$. For let  $N \in {{\underline {\mathfrak{Pr}}}_{\C(R)}}^{-1}(dg\mathcal{\widetilde{L}})$, then  $N \in {{\underline {\mathfrak{Pr}}}_{\C(R)}}^{-1}(\underline{R}[n])$ for every $n \in \mathbb{Z}$, by Lemma \ref{lem-dgL}, so $N$ is exact by \cite[Corollary 3.17]{Subcom}. Now, let  $n \in \mathbb{Z}$ and $L \in \mathcal{L}$, then  $N \in {{\underline {\mathfrak{Pr}}}_{\C(R)}}^{-1}(\underline{L}[n])$ for every $n \in \mathbb{Z}$, by Lemma \ref{lem-dgL}. Then, by \cite[Lemma 3.6]{Subcom}, $ Z_n(N) \in {{\underline {\mathfrak{Pr}}}_{R-\Mod}}^{-1}(L)$ for every  $L \in \mathcal{L}$. Then,  $ Z_n(N) \in {{\underline {\mathfrak{Pr}}}_{R-\Mod}}^{-1}(\mathcal{L})= \mathcal{G}$ for every $n \in \mathbb{Z}$. Thus, $ N \in \mathcal{\widetilde{G}}$. Now, let us prove that $  \mathcal{\widetilde{G}} \subseteq {{\underline {\mathfrak{Pr}}}_{\C(R)}}^{-1}(dg\mathcal{\widetilde{L}})$. Let $N \in \mathcal{\widetilde{G}}$ and $M \in dg\mathcal{\widetilde{L}}$, then $\Hom_{\K(R)}(M,N)=0$ since $\Hom^\bullet(X,N)$ is exact (see \cite[Lemma 2.1]{Gillespie}). For every $n \in \mathbb{Z}$, $Z_n(N) \in \mathcal{G}= {{\underline {\mathfrak{Pr}}}_{R-\Mod}}^{-1}(\mathcal{L})$, hence $N_n \in {{\underline {\mathfrak{Pr}}}_{R-\Mod}}^{-1}(\mathcal{L})$ since ${{\underline {\mathfrak{Pr}}}_{R-\Mod}}^{-1}(\mathcal{L})$ is closed under extensions. Then,  $N \in {{\underline {\mathfrak{Pr}}}_{\C(R)}}^{-1}(dg\mathcal{\widetilde{L}})$ by \cite[Theorem 3.11]{Subcom}. Thus, $N \in {{\underline {\mathfrak{Pr}}}_{\C(R)}}^{-1}(dg\mathcal{\widetilde{L}})$.\\	
	$4. \Rightarrow 1.$ Let $N \in {{\underline {\mathfrak{Pr}}}_{R-\Mod}}^{-1}(\mathcal{L})$, then for every $M \in dg\mathcal{\widetilde{L}}$, $N \in {{\underline {\mathfrak{Pr}}}_{R-\Mod}}^{-1}(M_0)$ since $M_0 \in \mathcal{L}$. Then,  for every $M  \in dg\mathcal{\widetilde{L}}$, $\overline{N} \in {{\underline {\mathfrak{Pr}}}_{\C(R)}}^{-1}(M)$ (see  \cite[Lemma 3.7]{Subcom}). Therefore, $\overline{N} \in {{\underline {\mathfrak{Pr}}}_{\C(R)}}^{-1}(dg\mathcal{\widetilde{L}})=\mathcal{\widetilde{G}}$, hence $N\in \mathcal{G}$. Conversely, let $N \in \mathcal{G}$ and $M \in \mathcal{L}$ then, $\overline{N} \in \mathcal{\widetilde{L}}$ and $\overline{M} \in dg\mathcal{\widetilde{L}}$ (see Lemma \ref{lem-dgL}). Then,  $\overline{N} \in {{\underline {\mathfrak{Pr}}}_{\C(R)}}^{-1}(\overline{M})$ by assumption. Then,  $N \in {{\underline {\mathfrak{Pr}}}_{R-\Mod}}^{-1}(M)$ by  \cite[Lemma 3.7]{Subcom}. Then,  $N \in {{\underline {\mathfrak{Pr}}}_{R-\Mod}}^{-1}(\mathcal{L})$ \\
	$(4.\Leftrightarrow 1.)\Rightarrow 2.$ It is clear that $\mathcal{\widetilde{G}} \subseteq {{\underline {\mathfrak{Pr}}}_{\C(R)}}^{-1}(\C^b(\mathcal{L}))$ since $\C^-(\mathcal{L}) \subseteq dg\mathcal{\widetilde{L}}$ by Lemma \ref{lem-dgL}. Conversely, let  $N \in {{\underline {\mathfrak{Pr}}}_{\C(R)}}^{-1}(\C^-(\mathcal{L}))$ then  $N \in {{\underline {\mathfrak{Pr}}}_{\C(R)}}^{-1}(\underline{R}[n])$ for every $n \in \mathbb{Z}$, so $N$ is exact by \cite[Corollary 3.17]{Subcom}. Now, let  $n \in \mathbb{Z}$ and $L \in \mathcal{L}$, then  $N \in {{\underline {\mathfrak{Pr}}}_{\C(R)}}^{-1}(\underline{L}[n])$ for every $n \in \mathbb{Z}$. Then, by \cite[Lemma 3.6]{Subcom}, $ Z_n(N) \in {{\underline {\mathfrak{Pr}}}_{R-\Mod}}^{-1}(L)$ for every  $L \in \mathcal{L}$. Then,  $ Z_n(N) \in {{\underline {\mathfrak{Pr}}}_{R-\Mod}}^{-1}(\mathcal{L})= \mathcal{G}$ for every $n \in \mathbb{Z}$. Thus, $ N \in \mathcal{\widetilde{G}}$.\\
	For $2. \Rightarrow 1.$ and $3. \Rightarrow 1.$ we use the same arguments of $4. \Rightarrow 1.$\\
	$(2.\Leftrightarrow 4.)\Rightarrow 3.$ is clear since $\C^b(\mathcal{L}) \subseteq \C^-(\mathcal{L}) \subseteq dg\mathcal{\widetilde{L}}$ (by Lemma \ref{lem-dgL}).\\
\end{proof}

Apllying Theorem \ref{thm-5-3} to the class of finitely presented modules,  we get the following  characterization of flat complexes.

\begin{cor}\label{cor-l1}  
	The following conditions are equivalent for a complex $F$.
	\begin{enumerate}
		\item $F$ is exact and every cycle is a flat module.
		\item Every morphism $X \to F$, with $X$ is a finitely presented complex, factors through a projective complex.
		\item Every morphism $X \to F$, with $X$ is a bounded below complex of finitely presented modules, factors through a projective complex.
		\item Every morphism $X \to F$, with $X$ is a $dg\mathcal{\widetilde{FP}}$ complex, factors through a projective complex.
	\end{enumerate}
\end{cor}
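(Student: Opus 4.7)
The plan is to obtain Corollary \ref{cor-l1} as a direct specialization of Theorem \ref{thm-5-3} to the pair $(\mathcal{L},\mathcal{G})=(\mathcal{FP}_{R-\Mod},\mathcal{F}_{R-\Mod})$, where $\mathcal{FP}_{R-\Mod}$ denotes the class of finitely presented modules and $\mathcal{F}_{R-\Mod}$ the class of flat modules. First I would check that the hypotheses of Theorem \ref{thm-5-3} are met: clearly $0,R\in \mathcal{FP}_{R-\Mod}$, and the identity ${{\underline {\mathfrak{Pr}}}_{R-\Mod}}^{-1}(\mathcal{FP}_{R-\Mod})=\mathcal{F}_{R-\Mod}$ is the classical characterization of flat modules via subprojectivity that is already recalled in the introduction.

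Next I would translate the four statements of Theorem \ref{thm-5-3} into the four conditions of the corollary. For condition 1, simply unpack the definition of $\widetilde{\mathcal{G}}=\widetilde{\mathcal{F}_{R-\Mod}}$: this is precisely the class of exact complexes whose cycles are flat modules. For conditions 2--4, I would use the standard dictionary that a complex $F$ lies in ${{\underline {\mathfrak{Pr}}}_{\C(R)}}^{-1}(\mathfrak{C})$ if and only if every morphism from a member of $\mathfrak{C}$ to $F$ factors through a projective complex. Combined with the identification $\C^b(\mathcal{FP}_{R-\Mod})=\mathcal{FP}_{\C(R)}$ — namely, that finitely presented complexes are exactly the bounded complexes of finitely presented modules (the fact cited from \cite[Lemma 4.1.1]{JR}) — the three statements $F\in{{\underline {\mathfrak{Pr}}}_{\C(R)}}^{-1}(\C^b(\mathcal{FP}_{R-\Mod}))$, $F\in{{\underline {\mathfrak{Pr}}}_{\C(R)}}^{-1}(\C^-(\mathcal{FP}_{R-\Mod}))$, and $F\in{{\underline {\mathfrak{Pr}}}_{\C(R)}}^{-1}(dg\widetilde{\mathcal{FP}_{R-\Mod}})$ turn respectively into conditions 2, 3, and 4.

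With these translations in hand, the equivalences $(1)\Leftrightarrow(2)\Leftrightarrow(3)\Leftrightarrow(4)$ are exactly the content of Theorem \ref{thm-5-3} applied to the chosen pair. Since the corollary is a specialization, there is no genuine obstacle; the only point requiring care is the bookkeeping step of matching the class identifications — in particular verifying that the ``finitely presented complexes'' appearing in condition 2 really coincide with $\C^b(\mathcal{FP}_{R-\Mod})$, which is the fact recalled in the introduction, so no new argument is needed.
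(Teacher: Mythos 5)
Your proposal is correct and matches the paper's own (implicit) proof exactly: the corollary is obtained by applying Theorem \ref{thm-5-3} to $(\mathcal{L},\mathcal{G})=(\mathcal{FP}_{R-\Mod},\mathcal{F}_{R-\Mod})$, using ${{\underline {\mathfrak{Pr}}}_{R-\Mod}}^{-1}(\mathcal{FP}_{R-\Mod})=\mathcal{F}_{R-\Mod}$ and the identification of finitely presented complexes with bounded complexes of finitely presented modules. Nothing further is needed.
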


Apllying Theorem \ref{thm-5-3}, to  $ {{\underline {\mathfrak{Pr}}}_{R-\Mod}}^{-1}(Proj)= R-\Mod$,  we get the following characterization of exact complexes.

\begin{cor}\label{cor-l2}
	The following conditions are equivalent for a complex $F$.
	\begin{enumerate}
		\item $F$ is exact.
		\item Every morphism $X \to F$, with $X$ is a bounded complex of projective modules, factors through a projective complex.
		\item Every morphism $X \to F$, with $X$ is a bounded complex below of projective modules, factors through a projective complex.
		\item Every morphism $X \to F$,with $X$ is a $DG$-projective complex, factors through a projective complex.
	\end{enumerate}
\end{cor}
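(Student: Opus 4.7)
The plan is to obtain Corollary \ref{cor-l2} as a direct instance of Theorem \ref{thm-5-3}, specialised to $\mathcal{L}=\mathcal{P}_{R-\Mod}$. First I would verify the two hypotheses: $0,R\in\mathcal{P}_{R-\Mod}$ is immediate, and the identity $\mathcal{G}:={{\underline {\mathfrak{Pr}}}_{R-\Mod}}^{-1}(\mathcal{P}_{R-\Mod})=R-\Mod$ follows from the trivial observation that any morphism $P\to N$ out of a projective factors as $P\xrightarrow{\mathrm{id}}P\to N$ through the projective $P$ itself. Thus ${{\underline {\mathfrak{Pr}}}_{R-\Mod}}^{-1}(\mathcal{L})=\mathcal{G}$ holds, i.e.\ condition (1) of Theorem \ref{thm-5-3} is satisfied.

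Next, with $\mathcal{G}=R-\Mod$, the class $\mathcal{\widetilde{G}}$ unwinds to the class of all exact complexes: the defining cycle condition in $\mathcal{\widetilde{L}}$ is vacuous once $\mathcal{G}$ is the full module category. Consequently the three equalities appearing as items (2)--(4) of Theorem \ref{thm-5-3} become
\[
{{\underline {\mathfrak{Pr}}}_{\C(R)}}^{-1}(\C^{b}(\mathcal{P}_{R-\Mod}))={{\underline {\mathfrak{Pr}}}_{\C(R)}}^{-1}(\C^{-}(\mathcal{P}_{R-\Mod}))={{\underline {\mathfrak{Pr}}}_{\C(R)}}^{-1}(dg\widetilde{\mathcal{P}_{R-\Mod}}) = \{\text{exact complexes}\}.
\]
Membership of $F$ in any class ${{\underline {\mathfrak{Pr}}}_{\C(R)}}^{-1}(\mathfrak{C})$ means, by definition of the subprojectivity domain, that every morphism $X\to F$ with $X\in\mathfrak{C}$ factors through a projective complex; so these three equalities translate verbatim into the equivalences (1)$\Leftrightarrow$(2), (1)$\Leftrightarrow$(3) and (1)$\Leftrightarrow$(4) of the corollary.

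The final (and only) bookkeeping step is to identify $dg\widetilde{\mathcal{P}_{R-\Mod}}$ with the classical class of DG-projective complexes that appears in item (4). This is a direct reading of the definition: since ${{\underline {\mathfrak{Pr}}}_{R-\Mod}}^{-1}(\mathcal{P}_{R-\Mod})=R-\Mod$, the condition that $\Hom^{\bullet}(X,G)$ be exact for every exact $G$ with cycles in ${{\underline {\mathfrak{Pr}}}_{R-\Mod}}^{-1}(\mathcal{P}_{R-\Mod})$ reduces to its being exact for every exact $G$, so $dg\widetilde{\mathcal{P}_{R-\Mod}}$ is precisely the class of complexes of projectives that are DG-projective in the usual sense.

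The argument is essentially a bookkeeping translation, so no genuine obstacle is expected; the only point deserving care is the identification $\mathcal{G}=R-\Mod$ at the start, and this was disposed of above.
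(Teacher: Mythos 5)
Your proposal is correct and follows exactly the paper's route: the corollary is obtained there by applying Theorem \ref{thm-5-3} to the class of projective modules, whose subprojectivity domain is all of $R-\Mod$, so that $\mathcal{\widetilde{G}}$ becomes the class of exact complexes and $dg\mathcal{\widetilde{L}}$ the class of classical DG-projective complexes. The extra details you supply (checking $0,R\in\mathcal{L}$ and unwinding the definitions) are precisely the bookkeeping the paper leaves implicit.
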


Finally, applying Theorem \ref{thm-5-3} to  $ {{\underline {\mathfrak{Pr}}}_{R-\Mod}}^{-1}(R-\Mod)=Proj$,  we get the following characterization of projective complexes.

\begin{cor}\label{cor-l3}
	The following conditions are equivalent for a complex $F$.
	\begin{enumerate}
		\item $F$ is exact and every cycle is a projective module.
		\item Every morphism $X \to F$, with $X$ is  bounded, factors through a projective complex.
		\item Every morphism $X \to F$, with $X$ is bounded below, factors through a projective complex.
		\item Every morphism $X \to F$, factors through a projective complex.
	\end{enumerate}
\end{cor}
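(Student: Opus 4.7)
The plan is to apply Theorem \ref{thm-5-3} with the particular choice $\mathcal{L} = R-\Mod$. The hypotheses $0, R \in \mathcal{L}$ are trivially satisfied, and the general identity ${{\underline {\mathfrak{Pr}}}_{\A}}^{-1}(\A) = \mathcal{P}_{\A}$ recalled in the introduction gives $\mathcal{G} := {{\underline {\mathfrak{Pr}}}_{R-\Mod}}^{-1}(R-\Mod) = \mathcal{P}_{R-\Mod}$, the class of projective modules. Consequently $\mathcal{\widetilde{G}}$ is the class of exact complexes with projective cycles, which is precisely the class $\mathcal{P}_{\C(R)}$ of projective complexes (again as recalled in the introduction). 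This identification turns the statement $F \in \mathcal{\widetilde{G}}$ into condition (1) of the corollary.

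With $\mathcal{L} = R-\Mod$, the class $\C^b(\mathcal{L})$ is exactly the class of all bounded complexes and $\C^-(\mathcal{L})$ is exactly the class of all bounded below complexes. Unfolding the definition of the subprojectivity domain of a class, the equalities ${{\underline {\mathfrak{Pr}}}_{\C(R)}}^{-1}(\C^b(\mathcal{L})) = \mathcal{\widetilde{G}}$ and ${{\underline {\mathfrak{Pr}}}_{\C(R)}}^{-1}(\C^-(\mathcal{L})) = \mathcal{\widetilde{G}}$ produced by the theorem translate verbatim into conditions (2) and (3) of the corollary, applied to the fixed complex $F$.

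The only substantive step is to identify $dg\mathcal{\widetilde{L}}$ with the whole category $\C(R)$, so that condition (4) of Theorem \ref{thm-5-3} collapses to condition (4) of the corollary. By definition, a complex $X$ lies in $dg\mathcal{\widetilde{R-\Mod}}$ precisely when $\Hom^{\bullet}(X,G)$ is exact for every exact complex $G$ whose cycles lie in $\mathcal{P}_{R-\Mod}$; but any such $G$ is exactly a projective complex, hence contractible, so $\Hom^{\bullet}(X,G)$ is contractible and in particular exact for every complex $X$. Thus $dg\mathcal{\widetilde{L}} = \C(R)$, and the equivalence of (1) and (4) in Theorem \ref{thm-5-3} gives the equivalence of (1) and (4) in the corollary.

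I do not anticipate any serious obstacle: the heavy lifting is entirely contained in Theorem \ref{thm-5-3} and the classical identifications $\mathcal{P}_{\C(R)} = \widetilde{\mathcal{P}_{R-\Mod}}$ and ``projective complex $\Leftrightarrow$ contractible with projective cycles''. The only point requiring a short verification is the equality $dg\mathcal{\widetilde{R-\Mod}} = \C(R)$, and this is immediate from contractibility of projective complexes.
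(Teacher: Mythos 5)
Your proposal is correct and follows exactly the route the paper intends: Corollary \ref{cor-l3} is stated as an immediate application of Theorem \ref{thm-5-3} to $\mathcal{L}=R\text{-}\Mod$ with ${{\underline {\mathfrak{Pr}}}_{R-\Mod}}^{-1}(R\text{-}\Mod)=\mathcal{P}_{R-\Mod}$, and no further proof is given there. Your explicit verification that $dg\mathcal{\widetilde{L}}=\C(R)$ in this case (via contractibility of projective complexes) is a detail the paper leaves implicit, and it is argued correctly.
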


Driss Bennis:   CeReMAR Center; Faculty of Sciences, Mohammed V University in Rabat, Rabat, Morocco.

\noindent e-mail address: driss.bennis@um5.ac.ma; driss$\_$bennis@hotmail.com

J. R. Garc\'{\i}a Rozas: Departamento de  Matem\'{a}ticas,
Universidad de Almer\'{i}a, 04071 Almer\'{i}a, Spain.

\noindent e-mail address: jrgrozas@ual.es

Hanane Ouberka:   CeReMAR Center; Faculty of Sciences, Mohammed V University in Rabat, Rabat, Morocco.

\noindent e-mail address: hanane$\_$ouberka@um5.ac.ma; ho514@inlumine.ual.es 

Luis Oyonarte: Departamento de  Matem\'{a}ticas, Universidad de
Almer\'{i}a, 04071 Almer\'{i}a, Spain.

\noindent e-mail address: oyonarte@ual.es

\end{document}